\numberwithin{equation}{section}
\tikzstyle{dot}=[shape=circle,draw,color=black,fill=black,inner sep=1pt]
\theoremstyle{plain}
\newtheorem{theorem}{Theorem}[section]
\newtheorem{lemma}[theorem]{Lemma}
\newtheorem{corollary}[theorem]{Corollary}
\newtheorem*{corollary*}{Corollary}
\newtheorem{proposition}[theorem]{Proposition}
\theoremstyle{definition}
\newtheorem*{definition*}{Definition}
\newtheorem{example}[theorem]{Example}
\theoremstyle{remark}
\newcommand{\R}{\mathbf{R}}
\newcommand{\C}{\mathbf{C}}
\newcommand{\Z}{\mathbf{Z}}
\newcommand{\cK}{\mathcal{K}}
\newcommand{\cM}{\mathcal{M}}
\newcommand{\cG}{\mathcal{G}}
\newcommand{\cN}{\mathcal{N}}
\newcommand{\cP}{\mathcal{P}}
\newcommand{\cH}{\mathcal{H}}
\newcommand{\defi}[1]{{\bf #1}}
\DeclareMathOperator{\val}{val}
\DeclareMathOperator{\supp}{supp}
\DeclareMathOperator{\tr}{tr}
\DeclareMathOperator{\Aut}{Aut}
\begin{document}

\title[]{Almost synchronous correlations and Tomita-Takesaki theory}
\date{\today}
%\subjclass[2010]{Primary 20F65;   % geometric gp thry;
%                Secondary 51E24} % buildings and the geometry of diagrams

\keywords{}

\author[A.~Marrakchi]{Amine Marrakchi}
\address{Université de Lyon, ENSL, CNRS, France}
\email{amine.marrakchi@ens-lyon.fr}
\author[M.~De~La~Salle]{Mikael De La Salle}
\address{Université de Lyon, Université Lyon 1, CNRS, France}
\thanks{MdlS was funded by the ANR grant Noncommutative analysis on groups and quantum groups ANR-19-CE40-0002-01}
\email{delasalle@math.univ-lyon1.fr}
\begin{abstract} The aim of this note is to present a ``type III'' generalization of a distribution lemma of Connes. We then derive, following Vidick, consequences on infinite-dimensional quantum strategies for non-local games.
\end{abstract}
\maketitle

Non-local games are the central objects in the recent solution to
Connes' embedding problem \cite{MIPRE}. In fact, all the games
considered there have the additional property of being synchronous. Let us recall the vocabulary.

A \defi{synchronous game} \cite{MR3933765} is a tuple $\cG=(X,\nu,A,D)$ where $X$ and $A$ are finite sets, $\nu$ is a symmetric probability measure on $X\times X$ and $D \colon X\times X\times A\times A\to \{0,1\}$ is a symmetric function satisfying $D(x,x,a,b) = 1_{a=b}$ for every $x \in X$ and $a,b \in A$. If $\alpha \in (0,1)$, we say that the synchronous game $\cG$ is \defi{$\alpha$-synchronous} if $\nu(x,x) \geq \alpha \sum_y \nu(x,y)$ for every $x \in X$. 

A \defi{commuting strategy} for $\cG$ is a family of probability
distributions on $A \times A$ of the form $\cP_{x,y}(a,b) = \langle
p^x_a \tilde{p}^y_b \xi,\xi\rangle$ where $\xi$ is a unit vector in a
Hilbert space $\cH$, $(p^x_a)_{a \in A}$ and $(\tilde{p}^y_b)_{b \in
  A}$ are partitions of unity (PVM) on $\cH$ such that
$[p^x_a,\tilde{p}^y_b]=0$ for every $x,y,a,b$.  A \defi{synchronous
  strategy} \cite{MR3460238} for $\cG$ is a commuting strategy such that
$\cP_{x,x}(a,b) =0$ for every $x \in X$ and $a\neq b$. The
\defi{value} of the strategy $\cP$ at the game $\cG$ is
  \[ \val(\cG,\cP) = \int \sum_{a,b} D(x,y,a,b) \cP_{x,y}(a,b) d\nu(x,y).\]

  So, given a synchronous game, synchronous strategies are those
  strategies that maximize
  \[ \int_X \sum_{a,b} D(x,y,a,b)
  \cP_{x,y}(a,b) d\nu(x,x) = \int_X \sum_{a} \cP_{x,y}(a,a)
  d\nu(x,x).\] It is therefore natural to expect that synchronous games
  that have a value close to $1$ have a synchronous value close to
  $1$. The next result confirms this expectation.
  \begin{theorem}\label{thm:main_games} Let $\varepsilon,\alpha \in (0,1)$ and $\cG$ be an $\alpha$-synchronous game admitting a commuting strategy with value $\geq 1-\varepsilon$. Then $\cG$ admits a synchronous strategy with value $\geq 1-c (\varepsilon/\alpha)^{\frac 1 4}$.
  \end{theorem}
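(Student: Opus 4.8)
The plan is to show that the hypothesis forces $\cP$ to be \emph{almost synchronous} in a quantitative $L^2$ sense, and then to synchronise it by means of the type III distribution lemma, which is the technical core of this note.

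\textbf{Step 1 (large value $\Rightarrow$ almost synchronicity).} Let $\mu(x):=\sum_{y}\nu(x,y)$ be the first marginal of $\nu$; it is a probability measure on $X$ because $\nu$ is. Since each $\cP_{x,y}$ is a probability distribution on $A\times A$ and $0\le D\le 1$,
\[ \sum_{x,y}\nu(x,y)\sum_{a,b}\bigl(1-D(x,y,a,b)\bigr)\cP_{x,y}(a,b)=1-\val(\cG,\cP)\le\varepsilon. \]
Every summand is nonnegative, so we may keep only the terms with $y=x$, where $D(x,x,a,b)=1_{a=b}$; this gives $\sum_{x}\nu(x,x)\sum_{a\neq b}\cP_{x,x}(a,b)\le\varepsilon$, hence by $\alpha$-synchronicity $\sum_{x}\mu(x)\sum_{a\neq b}\cP_{x,x}(a,b)\le\varepsilon/\alpha$. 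Now expand, for the commuting projections $p^x_a,\tilde p^x_a$,
\[ \bigl\|p^x_a\xi-\tilde p^x_a\xi\bigr\|^2=\langle p^x_a\xi,\xi\rangle+\langle\tilde p^x_a\xi,\xi\rangle-2\cP_{x,x}(a,a), \]
and sum over $a$, using that $(p^x_a)_a$ and $(\tilde p^x_a)_a$ are partitions of unity and $\sum_{a,b}\cP_{x,x}(a,b)=1$, to obtain $\sum_a\|p^x_a\xi-\tilde p^x_a\xi\|^2=2\sum_{a\neq b}\cP_{x,x}(a,b)$. Therefore
\[ \sum_{x}\mu(x)\sum_{a}\bigl\|p^x_a\xi-\tilde p^x_a\xi\bigr\|^2\le\frac{2\varepsilon}{\alpha}, \]
i.e.\ on $\mu$-average the PVM $(p^x_a)_a$ acts on $\xi$ like the commutant PVM $(\tilde p^x_a)_a$.

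\textbf{Step 2 (synchronisation).} Pass to $\cH_0:=\overline{(M\vee N)\xi}$, where $M$ and $N$ are the von Neumann algebras generated by the $p^x_a$ and the $\tilde p^y_b$ respectively, so $\xi$ is cyclic for $M\vee N$, and let $\varphi=\langle\,\cdot\,\xi,\xi\rangle$ on $M$. Applying the type III distribution lemma to $\varphi$ and the estimate of Step 1: through the Tomita--Takesaki modular theory of $(M,\varphi)$, the $L^2$-agreement of $(p^x_a)$ with $(\tilde p^x_a)$ on $\xi$ translates into a concentration near $1$ of the spectral measure of the modular operator in the relevant averaged sense, and a spectral truncation then produces a finite von Neumann algebra $(Q,\tau)$ with a faithful normal tracial state, PVMs $(q^x_a)_a$ in $Q$, and a bound of the form
\[ \sum_{x,y}\nu(x,y)\sum_{a,b}\bigl|\tau(q^x_a q^y_b)-\cP_{x,y}(a,b)\bigr|\le c'\Bigl(\frac{\varepsilon}{\alpha}\Bigr)^{1/4}, \]
the exponent $1/4$ being produced by the square root inherent to the perturbation step in the lemma together with a Cauchy--Schwarz passage from the $\mu$-averaged $L^2$ control above to the $\nu$-averaged $L^1$ control needed here. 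Set $\cP'_{x,y}(a,b):=\tau(q^x_a q^y_b)$. Realising $Q$ in its standard form $L^2(Q,\tau)$, with canonical tracial vector $\widehat 1$, and putting $\tilde q^y_b:=Jq^y_b J\in Q'$ exhibits $\cP'$ as a commuting strategy for $\cG$; since the $q^x_a$ are mutually orthogonal projections, $\cP'_{x,x}(a,b)=\tau(q^x_a q^x_b)=0$ for $a\neq b$, so $\cP'$ is synchronous.

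\textbf{Step 3 (comparison) and the main point.} Since $|D|\le1$, Step 2 yields $|\val(\cG,\cP)-\val(\cG,\cP')|\le c'(\varepsilon/\alpha)^{1/4}$, hence $\val(\cG,\cP')\ge 1-\varepsilon-c'(\varepsilon/\alpha)^{1/4}\ge 1-c(\varepsilon/\alpha)^{1/4}$ for a suitable universal $c$ (when $c(\varepsilon/\alpha)^{1/4}\ge1$ there is nothing to prove, any deterministic classical assignment being a synchronous strategy of nonnegative value). The bookkeeping in Steps 1 and 3 is routine; the entire difficulty lies in Step 2, namely in producing from the \emph{almost}-tracial state $\varphi$ on the type III algebra $M$ a genuine trace on a nearby finite algebra, together with projections close to the original PVMs and with effective estimates — which is exactly what the distribution lemma and Tomita--Takesaki theory are for, and which reduces to Vidick's argument (built on Connes' original distribution lemma) when $\varphi$ is already a trace.
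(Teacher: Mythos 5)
Your overall architecture is the paper's: extract almost-synchronicity on the diagonal from the value bound, synchronise via the type III distribution lemma, and compare values at the end. Steps 1 and 3 are correct and essentially identical to what the paper does (your identity $\sum_a\|p^x_a\xi-\tilde p^x_a\xi\|^2=2\sum_{a\neq b}\cP_{x,x}(a,b)$ checks out). But Step 2, which you yourself identify as the entire difficulty, is not a proof: it is a description of the statement to be proved. Nothing in it could be executed as written. Concretely, three things are missing. First, the bridge from the vector estimate of Step 1 to modular data: one must represent $\langle p^x_a\tilde p^y_b\xi,\xi\rangle=\tr(p^x_a h_\xi^{1/2}p'^y_b h_\xi^{1/2})$ for some POVM $(p'^y_b)\subset\cM$, where $h_\xi\in L_1(\cM)$ is the Haagerup density of the vector state, and deduce that $\int\sum_a\|[p^x_a,h_\xi^{1/2}]\|_{L_2(\cM)}^2\,d\mu(x)\lesssim\varepsilon/\alpha$; this identification and the passage from ``$p^x_a\xi\approx\tilde p^x_a\xi$'' to ``$p^x_a$ almost commutes with $h_\xi^{1/2}$'' is a genuine step, not bookkeeping. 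Second, the distribution lemma itself must be stated and used in a specific form: for positive $x,y\in L_2(\cM)$ one has $\|\chi_{(1,\infty)}(x)-\chi_{(1,\infty)}(y)\|_{L_2(\tau)}^2\leq\|x-y\|_{L_2(\cM)}\|x+y\|_{L_2(\cM)}$, where $\tau$ is the canonical trace on the core $c(\cM)$; this is what converts almost-commutation with $h_\xi^{1/2}$ into almost-commutation with the trace-one projection $q=\chi_{(1,\infty)}(h_\xi)$, and the finite algebra is then the explicit corner $qc(\cM)q$ — it is not produced by ``a spectral truncation'' of the modular operator. Third, $qp^x_aq$ is only a POVM in $qc(\cM)q$, so a quantitative orthogonalisation (Kempe--Vidick type) is still needed to obtain genuine PVMs $r^x_a$.

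Moreover, your one substantive assertion about the mechanism — that the $L^2$-agreement of $(p^x_a)$ with $(\tilde p^x_a)$ on $\xi$ ``translates into a concentration near $1$ of the spectral measure of the modular operator'' — is incorrect. The algebra $M$ may be genuinely of type III and the modular spectrum of $\varphi$ need not concentrate anywhere; the hypothesis constrains only the finitely many game projections, not the state. The correct mechanism is that the game projections nearly commute with the density $h_\xi$, which lets one compress them to the finite corner of the core where $\tau$ restricts to a tracial state. As it stands, Step 2 asserts the conclusion of the paper's Proposition on the Connes-type inequality and of its Theorem on almost synchronous correlations without proving either, so the proposal has a genuine gap exactly where the content of the theorem lies.
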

  When only finite-dimensionnal Hilbert spaces are allowed,
  Theorem~\ref{thm:main_games} was proved by Vidick
  \cite{MR4373849}. The generalization to infinite dimensions has also
  independently been obtained by Lin \cite{lin2023synchronous} by a
  different route.  

  Observe (see \cite[Theorem 5.5]{MR3460238}) that synchronous
  strategies are exactly the strategies of the form $\cP_{x,y}(a,b) =
  \tau(p^x_a p^y_b)$ for a family of partitions of unity $(p^x_a)_{a
    \in A}$ in a von Neumann algebra $\cN$ with a tracial state
  $\tau$. Therefore, the elementary statement of
  Theorem~\ref{thm:main_games} can be equivalently stated in von
  Neumann algebraic language as follows.
  \begin{theorem}\label{thm:main_gamesvN} Let $\varepsilon,\alpha \in (0,1)$ and $\cG$ be an $\alpha$-synchronous game. Assume that there is a von Neumann algebra $\cM \subset B(\cH)$, a unit vector $\xi \in \cH$ a two families of partitions of the unity $(p^x_a)_{a \in A} \subset \cM$ and $(\tilde{p}^x_a)_{a \in A} \subset \cM'$ such that
    \[   \int \sum_{a,b} D(x,y,a,b) \langle p^x_a \tilde{p}^y_b \xi,\xi\rangle d\nu(x,y) \geq 1-\varepsilon.\]
    Then there is a von Neumann algebra $\cN$ with a tracial state $\tau$ and a family of partitions of unity $(r^x_a)_{a \in A} \subset \cN$ such that
    \[   \int \sum_{a,b} D(x,y,a,b) \tau(r^x_a r^y_b) d\nu(x,y) \geq 1-c(\varepsilon/\alpha)^{\frac 1 4}.\]
  \end{theorem}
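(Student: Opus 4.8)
The plan is to keep $\cN$ as small as possible: let $\varphi=\langle\,\cdot\,\xi,\xi\rangle$ on $\cM$, which — after the routine reduction to the case where $\xi$ is cyclic and separating for $\cM$ (cut down by the central supports of the support projections of the vector states on $\cM$ and on $\cM'$) — is a faithful normal state; put $\cN=\cM^\varphi$, the centralizer, and $\tau=\varphi|_{\cM^\varphi}$, which is a faithful normal tracial state by the KMS condition. Then $\xi\in L^2(\cN,\tau)=\overline{\cN\xi}$ is the GNS vector of $\tau$, so $\tau(rs)=\langle rs\xi,\xi\rangle$ and $\|n\|_{2,\tau}=\|n\xi\|$ for $n\in\cN$. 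The first, elementary, step is a \emph{mirror estimate}: restricting the hypothesis to the diagonal of $\nu$, where $D(x,x,a,b)=1_{a=b}$, and using that the PVMs sum to $1$, one gets $\sum_x\nu(x,x)\sum_a\|p^x_a\xi-\tilde p^x_a\xi\|^2\le 2\varepsilon$ (because $\|p^x_a\xi-\tilde p^x_a\xi\|^2=\|p^x_a\xi\|^2+\|\tilde p^x_a\xi\|^2-2\langle p^x_a\tilde p^x_a\xi,\xi\rangle$ and $\sum_a\langle p^x_a\tilde p^x_a\xi,\xi\rangle=\sum_a\cP_{x,x}(a,a)$). Then $\alpha$-synchronicity, $\nu(x,x)\ge\alpha\,\mu(x)$ with $\mu(x)=\sum_y\nu(x,y)$ the (probability) marginal of $\nu$, turns this into $\E_{x\sim\mu}\sum_a\|p^x_a\xi-\tilde p^x_a\xi\|^2\le 2\varepsilon/\alpha$.

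The heart of the argument is the ``type III'' distribution lemma announced in the abstract, which I would use as follows: for each $x$, from the PVM $(p^x_a)_a\subset\cM$ and the PVM $(\tilde p^x_a)_a\subset\cM'$ it produces a PVM $(r^x_a)_a\subset\cM^\varphi$ with $\sum_a\|p^x_a\xi-r^x_a\xi\|^2\le C\big(\sum_a\|p^x_a\xi-\tilde p^x_a\xi\|^2\big)^{1/2}$. The mechanism is Tomita--Takesaki theory: the relations $\Delta^{1/2}p^x_a\xi=Jp^x_a\xi$ and $\Delta^{-1/2}\tilde p^x_a\xi=J\tilde p^x_a\xi$, together with $p^x_a\xi\approx\tilde p^x_a\xi$, force the $p^x_a$ to almost commute with the modular operator $\Delta$ (equivalently $\sigma^\varphi_t(p^x_a)\xi\approx p^x_a\xi$), and a Connes-type spectral rounding/averaging against the modular flow then pushes the PVM into the centralizer. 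Combining this with the mirror estimate and Cauchy--Schwarz over $x\sim\mu$ gives $\E_{x\sim\mu}\sum_a\|p^x_a\xi-r^x_a\xi\|^2\le C(2\varepsilon/\alpha)^{1/2}$, hence also $\E_{x\sim\mu}\sum_a\|\tilde p^x_a\xi-r^x_a\xi\|^2\le C'\big((\varepsilon/\alpha)^{1/2}+\varepsilon/\alpha\big)$.

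It remains to estimate the value of the synchronous strategy $\cP'_{x,y}(a,b):=\tau(r^x_ar^y_b)=\|r^x_ar^y_b\xi\|^2$, for which the key is a dimension-free comparison with $\cP_{x,y}(a,b)=\langle p^x_a\tilde p^y_b\xi,\xi\rangle=\|p^x_a\tilde p^y_b\xi\|^2$. A $\chi^2$-type Cauchy--Schwarz gives, for each $(x,y)$, $\sum_{a,b}D(x,y,a,b)|\cP'_{x,y}(a,b)-\cP_{x,y}(a,b)|\le\sqrt2\big(\sum_{a,b}\tfrac{(\cP'_{x,y}(a,b)-\cP_{x,y}(a,b))^2}{\cP_{x,y}(a,b)+\cP'_{x,y}(a,b)}\big)^{1/2}\le 2\big(\sum_{a,b}\|p^x_a\tilde p^y_b\xi-r^x_ar^y_b\xi\|^2\big)^{1/2}$, using $\big|\,\|u\|^2-\|v\|^2\big|\le\|u-v\|(\|u\|+\|v\|)$ and $\sum_{a,b}(\cP_{x,y}+\cP'_{x,y})=2$. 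Now $[p^x_a,\tilde p^y_b]=0$ lets one split $p^x_a\tilde p^y_b\xi-r^x_ar^y_b\xi=\tilde p^y_b(p^x_a-r^x_a)\xi+r^x_a(\tilde p^y_b-r^y_b)\xi$, and since $(\tilde p^y_b)_b$ and $(r^x_a)_a$ are PVMs, $\sum_{a,b}\|\tilde p^y_b(p^x_a-r^x_a)\xi\|^2=\sum_a\|(p^x_a-r^x_a)\xi\|^2$ and $\sum_{a,b}\|r^x_a(\tilde p^y_b-r^y_b)\xi\|^2=\sum_b\|(\tilde p^y_b-r^y_b)\xi\|^2$: the double sum collapses to single sums, \emph{with no factor of $|A|$}. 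Averaging over $(x,y)\sim\nu$ by one more Cauchy--Schwarz and inserting the bounds of the previous paragraph gives $|\val(\cG,\cP')-\val(\cG,\cP)|\le c'(\varepsilon/\alpha)^{1/4}$ (assuming harmlessly $\varepsilon/\alpha<1$), whence $\val(\cG,\cP')\ge 1-\varepsilon-c'(\varepsilon/\alpha)^{1/4}\ge 1-c(\varepsilon/\alpha)^{1/4}$.

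The main obstacle is the distribution lemma itself: extracting a genuine PVM of the centralizer from the mirror condition $p^x_a\xi\approx\tilde p^x_a\xi$ forces one to cope with the unboundedness of $\Delta^{\pm1/2}$ and is precisely where the type III features (no trace, nontrivial modular flow) must be controlled; the square root lost there, together with the one lost in the $\chi^2$ step, is what yields the exponent $\tfrac14$. Everything else — the reduction to $\xi$ cyclic and separating, the mirror estimate, and the value comparison — is elementary once that lemma is available.
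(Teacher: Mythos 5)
Your reduction to the diagonal (the mirror estimate $\E_{x\sim\mu}\sum_a\|p^x_a\xi-\tilde p^x_a\xi\|^2\le 2\varepsilon/\alpha$) and your final value comparison via the $\chi^2$/Cauchy--Schwarz argument are both correct and essentially what the paper does (see the proof of Corollary~\ref{cor:application_games} and Lemma~\ref{lem:Hilbert_space}). The gap is at the step you yourself call the heart of the argument: the claimed rounding of $(p^x_a)_a$ to a PVM in the \emph{centralizer} $\cM^\varphi$ with error controlled by $\big(\sum_a\|p^x_a\xi-\tilde p^x_a\xi\|^2\big)^{1/2}$. That lemma is false, and the whole route through $\cM^\varphi$ cannot be repaired. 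Already in finite dimensions: take $\cM=M_d(\C)\otimes 1\subset B(\C^d\otimes\C^d)$ and $\xi=\sum_n\sqrt{\lambda_n}\,e_n\otimes e_n$ with the $\lambda_n$ distinct but all close to $1/d$, a small perturbation of the maximally entangled state $\xi_{\max}$. Then $\varphi=\Tr(\,\cdot\,D)$ with $D$ of simple spectrum, so $\cM^\varphi=\{D\}'$ is the \emph{diagonal} (abelian) algebra, and every synchronous strategy with PVMs in $(\cM^\varphi,\varphi)$ is a classical (local hidden variable) correlation. Now choose for $\cG$ a synchronous game whose classical value is bounded away from $1$ but which has a near-perfect strategy on $\xi_{\max}$ with $p^x_a\xi_{\max}=\tilde p^x_a\xi_{\max}$ (a synchronous linear-system game, say). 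The perturbed strategy satisfies your hypotheses with $\varepsilon$ as small as you like, yet no PVM in $\cM^\varphi$ is close to $(p^x_a)_a$ and no strategy in $\cM^\varphi$ has value near $1$. The analytic obstruction is the one you flag yourself: $p^x_a\xi\approx\tilde p^x_a\xi$ only controls $\sigma^\varphi_t(p^x_a)\xi-p^x_a\xi$ for \emph{bounded} $t$, and the ergodic average over the full modular flow --- which is what would land you in the fixed-point algebra $\cM^\varphi$ --- destroys the approximation.

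The paper's fix is precisely not to round inside $\cM$ at all: the tracial algebra is $\cN=q_\xi\,c(\cM)\,q_\xi$, a trace-one corner of the \emph{core} of $\cM$ (the crossed product by the modular flow), where $q_\xi=\chi_{(1,\infty)}(h_\xi)$ is a spectral projection of the Haagerup $L_1$-element representing $\varphi$. In your finite-dimensional test case this is a corner $qM_dq$ with $q$ a spectral projection of $D$, equipped with the normalized \emph{trace} rather than with $\varphi$ --- visibly not contained in $\{D\}'$. The quantitative input replacing your lemma is Proposition~\ref{prop:Connes_inequality}, which converts smallness of $\|[p^x_a,h_\xi^{1/2}]\|_{L_2(\cM)}$ (a consequence of the mirror estimate) into smallness of $\|[p^x_a,q_\xi]\|_{L_2(\tau)}$ at the cost of one square root; a further orthogonalization step turns the POVM $(q_\xi p^x_a q_\xi)_a$ into a genuine PVM of the corner. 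So the architecture of your argument (mirror estimate, rounding lemma, value comparison) is the right one, but the target of the rounding must be the core, not the centralizer.
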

What is amusing to notice is that Theorem~\ref{thm:main_games} is very
elementary to state, but we do not see any elementary proof of
it. Both Lin's and our proofs rely on quite advanced von Neumann
algebra techniques (the Connes-Tomita-Takesaki theory). More precisely
our proof relies on a new ``type III'' generalization of a
distribution lemma of Connes \cite{MR454659}. We believe that this
result (Proposition~\ref{prop:Connes_inequality}) is of independent
interest. Lin does not rely on our type III generalization of Connes' Lemma, but shows that arbitrary commuting correlations can be approximated by correlations coming from finite von Neumann algebras, and then runs Vidick's proof in the tracial setting. Our proof has the feature that the von Neumann algebra $\cM$ is completely explicit~: $\cM$ can be taken as a finite corner of the \emph{core} of $\cM$ appearing in the Connes-Tomita-Takesaki theory (as the crossed product of $\cM$ by its modular flow), see Corollary \ref{cor:application_games}. In the particular case when $\cM$ is semi-finite, $\cN$ can even be taken as a finite corner of $\cM$, see Corollary~\ref{cor:application_games_semifinite}.
\section{Connes' lemma}
Let $\cM$ be a von Neumann algebra. There is a unique (up to unique isomorphism) tuple $(c(\cM),\tau,\theta,\iota)$ of a von Neumann algebra $c(\cM)$ with a faithful semifinite trace $\tau$, and continuous group homomorphism $\theta \colon \R \to \Aut(\cM)$ and an embedding $\iota\colon \cM \to c(\cM)$ satisfying the conditions $\iota(\cM) = \{x \in c(\cM) \mid  \theta_s(x)=x \forall s\in \R\}$ and
\begin{equation}\label{eq:tautheta}\tau \circ \theta_s = e^{-s} \tau \forall s \in \R.
\end{equation}
It is called the \defi{core} of $\cM$. See \cite[Theorem 3.3, 3.4 and 3.5]{MR1829246} or \cite[Theorem 6.11]{MR1943006}.

\begin{example}\label{example:core_semifinite} If $\cM = \C$, $c(\cM)$ can be realized as $L_\infty(\R)$ with translation action $\theta^\C_s f = f(\cdot - s)$ and the trace $\tau^\C(f) = \int f(s) e^{-s} ds$.  If $\cM$ carries a semifinite trace $\tau_0$, then $c(\cM) = c(\C) \otimes \cM$ with the natural action $\theta^\C_s \otimes 1$ and trace $\tau^\C \otimes \tau_0$. If $\cM$ is not semifinite, the construction is much more difficult, it is the accomplishment of the Tomita-Takesaki theory.
\end{example}

Haagerup's \defi{non-commutative $L_p$ space} $L_p(\cM)$ \cite{MR560633} is the space of $\tau$-measurable operators affiliated with $c(\cM)$ and satisfying $\theta_s(x) = e^{-\frac s p} x$ for all $s$. Then for $1 \leq p<\infty$, $\|x\|_p:= \tau( \chi_{(1,\infty)}(|x|))^{\frac 1 p}$ is a complete norm on $L_p(\cM)$, and $(L_2(\cM),\|\cdot\|_2)$ is a Hilbert space. %% Observe that for every probability measure $m$ on $(0,\infty)$,
%% \begin{equation}\label{eq:Lpnorm}\|x\|_{L_p(\cM)} = \tau( f_m(|x|^p))^{\frac 1 p},\textrm{ where }f_m(t) = \int_{(0,t)} a dm(a).
%% \end{equation}
%% Indeed, writing $f_m = \int_0^\infty a\chi_{(a,\infty)} dm(a)$, we get
%%   \begin{align*} \tau( f_m(|x|^p))& = \int_0^\infty a \tau( \chi_{[a^{1/p},\infty)}(|x|)) dm(a)\\
%%       & =\int_0^\infty \tau( \theta_{-\log a} \chi_{[a^{1/p},\infty)}(|x|)) dm(a)\\
%%         & = \int_0^\infty \tau( \chi_{(1,\infty)}(|x|)) dm(a) = \|x\|_{L_p(\cM)}^p. 
%%   \end{align*}

The fundamental fact due to Haagerup that we will use is that the non-commutative $L_1$ space $L_1(\cM)$ is naturally identified with $\cM_*$. Explicitely, the set of positive elements $\varphi \in \cM_*$ is in bijection with the positive elements $x \in L_1(\cM)$, the bijection being characterized by
\[ \varphi\Big(\int_\R \theta_s(z)ds \Big) = \tau(z x)\]
for every positive $z \in c(\cM)$. We write $\varphi(1) = \tr(x)$, and the linear map $\tr$ extends to a linear map $L_1(\cM) \to \R$ that satisfies $\tr(xy)= \tr(yx)$ for every $1\leq p \leq \infty$ and $x \in L_p(\cM)$, $y \in L_{p'}(\cM)$. We will make several uses of the following lemma.
\begin{lemma}\label{lem:tr_HaagerupLp} Let $1 \leq p<\infty$. If $x \in L_p(\cM)$ and $y \in L_{p'}(\cM)$ are positive, then
\[ \tr(xy) = \tau(x^{1-p}\chi_{(1,\infty)}(x) y).\]
\end{lemma}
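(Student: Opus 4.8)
The plan is to reduce the identity to the defining property of Haagerup's $L_1$ space, together with a one-line integral computation that exploits the scaling $\theta_s(x)=e^{-s/p}x$.

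First I would pass from the (possibly non-positive) product $xy$ to a positive element of $L_1$. Since $x\ge0$ lies in $L_p(\cM)$, the operator $x^{1/2}$ lies in $L_{2p}(\cM)$, so by Hölder's inequality and the cyclicity of $\tr$ recalled above, $w:=x^{1/2}yx^{1/2}$ is a positive element of $L_1(\cM)$ and $\tr(xy)=\tr(yx)=\tr\big((yx^{1/2})x^{1/2}\big)=\tr\big(x^{1/2}(yx^{1/2})\big)=\tr(w)$. Let $\varphi\in\cM_*^+$ correspond to $w$ under Haagerup's identification, so that $\tr(w)=\varphi(1)$ and $\varphi\big(\int_\R\theta_s(z)\,ds\big)=\tau(zw)$ for every positive $z\in c(\cM)$. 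Since the range of $w$ is contained in the range of $x^{1/2}$, we have $w=e_xwe_x$, where $e_x$ is the support projection of $x$; note that $e_x$ is $\theta$-invariant, hence lies in $\cM$. Replacing $z$ by $e_xze_x$ in the displayed identity and using traciality of $\tau$ gives $\varphi\big(e_x(\int_\R\theta_s(z)\,ds)e_x\big)=\varphi\big(\int_\R\theta_s(z)\,ds\big)$ for every positive $z$, and as the elements $\int_\R\theta_s(z)\,ds$ determine $\varphi$, we conclude $\varphi(1)=\varphi(e_x)$.

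The crux is the choice of test element: I would take $z:=x^{-p}\chi_{(1,\infty)}(x)\in c(\cM)$ (the inverse being taken on the support of $x$), a positive operator of norm $\le1$ because $\lambda^{-p}\le1$ on $(1,\infty)$. From $\theta_s(x)=e^{-s/p}x$ one gets $\theta_s(z)=e^{s}\,x^{-p}\chi_{(e^{s/p},\infty)}(x)$, so by functional calculus, at a spectral value $\lambda>0$ of $x$,
\[ \int_\R\theta_s(z)\,ds \ \text{ acts by }\ \lambda^{-p}\int_{-\infty}^{p\log\lambda}e^{s}\,ds=\lambda^{-p}\lambda^{p}=1,\]
hence $\int_\R\theta_s(z)\,ds=e_x$. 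Therefore
\[ \tr(xy)=\tr(w)=\varphi(1)=\varphi(e_x)=\varphi\Big(\int_\R\theta_s(z)\,ds\Big)=\tau(zw).\]
Finally, $z$ commutes with $x^{1/2}$ and $x^{1/2}zx^{1/2}=x\cdot x^{-p}\chi_{(1,\infty)}(x)=x^{1-p}\chi_{(1,\infty)}(x)$ is bounded, so traciality of $\tau$ yields $\tau(zw)=\tau\big(zx^{1/2}yx^{1/2}\big)=\tau\big(x^{1/2}zx^{1/2}y\big)=\tau\big(x^{1-p}\chi_{(1,\infty)}(x)y\big)$, which is the claim.

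The genuine content is entirely the choice of $z$ and the computation $\int_\R\theta_s(z)\,ds=e_x$; the remaining steps are routine bookkeeping with $\tau$-measurable operators in the framework of Haagerup and Terp. The points I would nevertheless check with care are: that $w\in L_1(\cM)$ and that Hölder and cyclicity of $\tr$ apply with the exponents $2p$ and $(2p)'$; that the monotone convergence $\int_{-N}^{N}\theta_s(z)\,ds\uparrow e_x$ may be passed through the normal functional $\varphi$; and that the final traciality step is valid — which it is, since $x^{1/2}zx^{1/2}$ is bounded and $\tau(zw)=\varphi(e_x)\le\tr(w)<\infty$.
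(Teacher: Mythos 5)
Your proof is correct and takes essentially the same route as the paper's: both hinge on the test element $z=x^{-p}\chi_{(1,\infty)}(x)$, the computation $\int_\R\theta_s(z)\,ds=\supp(x)$ (which the paper delegates to Terp's Lemma~5 and you carry out via the scaling $\theta_s(x)=e^{-s/p}x$), the defining property of $\tr$ applied to $w=x^{1/2}yx^{1/2}$, and the trace property of $\tau$. You merely make explicit the support-projection bookkeeping ($\varphi(1)=\varphi(\supp(x))$) that the paper leaves implicit.
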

\begin{proof} 
  Define $z = x^{-p} \chi_{(1,\infty)}(x)$ with the convention $0^{-p} \chi_{(1,\infty)}(0)=0$. Then %% $\theta_s(z) = e^{s} x^{-p} \chi_{(e^s,\infty)}(x^p)$, so using that $\int_\R e^{s} t^{-1} \chi_{(e^s,\infty)}(t) ds=\chi_{(0,\infty)}(t)$ for every $t \in \R_+$, 
  we have  $\int_\R \theta_s(z) = \supp(x)$ (see the proof of \cite[Lemma 5]{terp} for the details). By the definition of $\tr$, we deduce
  \[ \tau(z x^{\frac 1 2} y x^{\frac 1 2}) = \tr(x^{\frac 1 2} y x^{\frac 1 2}).\]
The lemma follows by the trace property.
\end{proof}
A particular case of this lemma is that for $x \in L_p(\cM)$, $\|x\|_p = (\tr( |x|^p))^{\frac 1 p}$.

We warn the reader that they are two different non-commutative $L_p$ spaces involved: Haagerup's $L_p(\cM)$ that we have just defined, and the tracial space $L_p(c(\cM),\tau)$, which is the space of operators affiliated with $c(\cM)$ and satisfying $\tau( |x|^p)^{\frac 1 p} < \infty$. They are both made of operators affiliated with $c(\cM)$ (but we have $L_p(c(\cM),\tau) \cap L_p(\cM) = \{0\}$). To avoid confusion, we will write $\|x\|_{L_p(\cM)}$ for the first, and $\|x\|_{L_p(\tau)}$ for the second.

Our goal is precisely to compare these two non-commutative $L_p$
spaces for $p=2$ (this can be combined with results from
\cite{MR3299148} to obtain similar inequalities for other values of
$p$). This generalizes \cite[Lemma 1.2.6]{MR454659} to non-semifinite
von Neumann algebras.
\begin{proposition}\label{prop:Connes_inequality} For every $x,y \in L_2(\cM)^+$,
  \[ \|x-y\|_{L_2(\cM)}^2\leq  \|\chi_{(1,\infty)}(x) - \chi_{(1,\infty)}(y)\|_{L_2(\tau)}^2 \leq \|x-y\|_{L_2(\cM)} \|x+y\|_{L_2(\cM)}\]
\end{proposition}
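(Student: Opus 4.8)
The plan is to turn both inequalities into statements about traces of products of spectral projections in the semifinite von Neumann algebra $c(\cM)$, and then to prove those by the spectral ``layer-cake'' argument that Connes uses in the semifinite case — the new point being the way the Haagerup $L_p$-structure interacts with the trace $\tau$ on $c(\cM)$.

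Set $e=\chi_{(1,\infty)}(x)$ and $f=\chi_{(1,\infty)}(y)$, which are $\tau$-finite projections in $c(\cM)$. Lemma~\ref{lem:tr_HaagerupLp} with $p=2$, applied to $z\in L_2(\cM)^+$, gives $\|z\|_{L_2(\cM)}^2=\tr(z^2)=\tau\big(z^{-1}\chi_{(1,\infty)}(z)\,z\big)=\tau(\chi_{(1,\infty)}(z))$; in particular $\tau(e)=\|x\|_{L_2(\cM)}^2$ and $\tau(f)=\|y\|_{L_2(\cM)}^2$ (this already settles the case $x=y$). Expanding squares, and using $\tr(xy)=\tr(yx)$ and $\tau(ef)=\tau(fe)$, yields
\[ \|x-y\|_{L_2(\cM)}^2=\tau(e)+\tau(f)-2\tr(xy),\qquad \|e-f\|_{L_2(\tau)}^2=\tau(e)+\tau(f)-2\tau(ef),\]
so the left inequality is equivalent to $\tau(ef)\le\tr(xy)$. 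For the right inequality I would go through $\|x^2-y^2\|_{L_1(\cM)}$: Hölder for Haagerup $L_p$ applied to $x^2-y^2=\tfrac12\big((x-y)(x+y)+(x+y)(x-y)\big)$ gives $\|x^2-y^2\|_{L_1(\cM)}\le\|x-y\|_{L_2(\cM)}\|x+y\|_{L_2(\cM)}$; Lemma~\ref{lem:tr_HaagerupLp} with $p=1$ together with linearity of $\tr$ identify $\|x^2-y^2\|_{L_1(\cM)}=\tr\big((x^2-y^2)_+\big)+\tr\big((y^2-x^2)_+\big)=\tau\big(\chi_{(1,\infty)}(x^2-y^2)\big)+\tau\big(\chi_{(1,\infty)}(y^2-x^2)\big)$; and $\|e-f\|_{L_2(\tau)}^2=\tau\big((1-f)e\big)+\tau\big((1-e)f\big)$. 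So it suffices to prove $\tau\big((1-f)e\big)+\tau\big((1-e)f\big)\le\tau\big(\chi_{(1,\infty)}(x^2-y^2)\big)+\tau\big(\chi_{(1,\infty)}(y^2-x^2)\big)$.

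It remains to establish the two inequalities $\tau(ef)\le\tr(xy)$ and the $x^2-y^2$ inequality just displayed. This is where the modular flow enters. Writing $p_u=\chi_{(u,\infty)}(x)$ and $q_u=\chi_{(u,\infty)}(y)$, the relations $\theta_s(p_u)=\chi_{(ue^{s/2},\infty)}(x)=p_{ue^{s/2}}$ and $\tau\circ\theta_s=e^{-s}\tau$ force $\tau(p_u)=u^{-2}\tau(e)$ (so every $p_u$, $q_v$ is $\tau$-finite), and more generally $\tau(p_uq_v)=u^{-2}\tau\big(e\,q_{v/u}\big)$ and $\tau(p_u\,y)=u^{-1}\tau(e\,y)$, the last giving the pleasant identity $\tau(e\,y)=2\tr(xy)$. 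Using the layer-cake representations $x=\int_0^\infty p_u\,du$, $y=\int_0^\infty q_v\,dv$, $x^{-1}e=e-\int_1^\infty p_t\,t^{-2}\,dt$ (as positive operators affiliated with $c(\cM)$), together with $\tr(xy)=\tau(x^{-1}e\,y)$ from Lemma~\ref{lem:tr_HaagerupLp}, one rewrites $\tr(xy)$, $\tau(ef)$ — and analogously the quantities entering the $x^2-y^2$ inequality — as absolutely convergent iterated integrals of the numbers $\tau(p_tq_v)$ against explicit measures. A Fubini rearrangement then reduces both inequalities to the pointwise estimate
\[ 2\,\tau(p'q')\le\tau(pq')+\tau(p'q)\qquad\text{whenever $p\ge p'$ and $q\ge q'$ are projections},\]
which holds because $\tau\big((p'-p)q'\big)\le0$ and $\tau\big(p'(q'-q)\big)\le0$ — each being $\pm\,\tau$ of a product of two positive operators — exactly as in Connes' original argument in the semifinite case.

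The real work, I expect, is the bookkeeping of the previous paragraph: matching $\tr(xy)$, $\tau(ef)$ and the $x^2-y^2$ quantities with the correct iterated integrals of the $\tau(p_tq_v)$, keeping track of the weights coming from the layer-cake of $x^{-1}e$ and from the scaling $\tau(p_u)=u^{-2}\tau(e)$, and justifying the interchange of $\tr$ (respectively $\tau$) with the spectral integrals and the use of Fubini for $\tau$-measurable operators. All convergence is controlled by $\tau(p_uq_v)\le\min\{\tau(p_u),\tau(q_v)\}=\min\{u^{-2}\tau(e),\,v^{-2}\tau(f)\}$, so that once the integrals have been identified correctly the pointwise projection inequality closes both estimates.
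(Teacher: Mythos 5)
Your reduction of the first inequality is sound: the identity $\|z\|_{L_2(\cM)}^2=\tau(\chi_{(1,\infty)}(z))$, the scaling relations $\tau(p_u)=u^{-2}\tau(e)$, $\tau(p_uq_v)=u^{-2}\tau(eq_{v/u})$ and $\tau(ey)=2\tr(xy)$ are all correct, and the inequality $\tau(ef)\le\tr(xy)$ does follow from them together with monotonicity of $v\mapsto\tau(eq_v)$ and $u\mapsto\tau(p_uf)$ (indeed $\tr(xy)=\tfrac12\int_0^\infty\tau(eq_v)\,dv=\tfrac12\int_0^1\tau(eq_v)\,dv+\tfrac12\int_0^1\tau(p_uf)\,du\ge\tau(ef)$). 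This is, in substance, the same mechanism as the paper's, which packages the scaling invariance into a single finite measure $\mu$ on $[0,1]$ with $\tau(f(x)g(y))=\int_0^1\int_0^\infty f(\lambda/\sqrt r)g((1-\lambda)/\sqrt r)\,dr\,d\mu(\lambda)$ and reads off $\|x-y\|^2=\int(1-2\lambda)^2d\mu\le\int|2\lambda-1|d\mu=\|e-f\|_{L_2(\tau)}^2$.

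The second inequality, however, has a genuine gap. You interpolate through $\|x^2-y^2\|_{L_1(\cM)}$ and claim that $\tau(\chi_{(1,\infty)}((x^2-y^2)_\pm))$ can be rewritten as iterated integrals of the numbers $\tau(p_tq_v)$ and then handled by the pointwise projection estimate. This cannot work: $\|x^2-y^2\|_{L_1(\cM)}$ involves the spectral projections of the \emph{third} operator $x^2-y^2$ and is not a function of the two-variable moments $\tau(f(x)g(y))$. (Already for two projections $p,q$ with $\tau(p)=\tau(q)=1$ and $\tau(pq)=\cos^2\theta$, the moments are the same whether $p,q$ commute or sit in generic position in $M_2$, yet $\|p-q\|_1$ equals $2\sin^2\theta$ in the first case and $2|\sin\theta|$ in the second.) So even granting that the intermediate inequality $\|e-f\|_{L_2(\tau)}^2\le\|x^2-y^2\|_{L_1(\cM)}$ is true, your proposed proof of it does not close, and it is in any case a detour neither Connes nor the paper takes. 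The correct route stays entirely inside the joint spectral data: with $\nu$ the joint spectral measure of $(x,y)$ one has the exact identity $\|e-f\|_{L_2(\tau)}^2=\int|u^2-v^2|\,d\nu=\int|u-v|\,|u+v|\,d\nu$ (a ``commutative'' $|u^2-v^2|$, not the operator $|x^2-y^2|$), and the Cauchy--Schwarz inequality \emph{for the measure $\nu$} gives the bound $\big(\int(u-v)^2d\nu\big)^{1/2}\big(\int(u+v)^2d\nu\big)^{1/2}=\|x-y\|_{L_2(\cM)}\|x+y\|_{L_2(\cM)}$, where the last identification uses $\tr(xy)=\int uv\,d\nu$ — which you essentially already have from $\tr(xy)=\tau(x^{-1}e\,y)$. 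Replacing your $x^2-y^2$ step by this Cauchy--Schwarz on $\nu$ would repair the argument.
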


A moment of thoughts and a look at Example~\ref{example:core_semifinite} reveals for, for $\cM$ semi-finite, Proposition~\ref{prop:Connes_inequality} is indeed an equivalent form of \cite[Lemma 1.2.6]{MR454659}.

For the proof, we start with the following key observation of Connes \cite[Proposition 1.1]{MR454659} that gives the existence of a joint spectral measure for two positive operators in a tracial von Neumann algebra.
\begin{proposition}[Connes] \label{prop:existence_nu} 
Let $\cM$ be a von Neumann algebra with a normal semifinite trace $\tau$. Let $x$ and $y$ be two unbounded positive operators affilitated with $\cM$. Let $\nu_x : A \mapsto \tau(\chi_A(x))$ and $\nu_y : B \mapsto \tau(\chi_B(y))$ be the spectral measures of $x$ and $y$ respectively. Suppose that $\nu_x|_{A_0}$ and $\nu_y|_{B_0}$ are $\sigma$-finite for some Borel subsets $A_0, B_0 \subset \R_+$. Then there exists a unique $\sigma$-finite measure $\nu$ on $(A_0 \times \R_+) \cup (\R_+ \times B_0)$ such that
$$ \tau(\chi_A(x) \chi_B(y))=\nu(A \times B)$$
for all Borel sets $A,B \subset \R_+$ such that $A \subset A_0$ or $B \subset B_0$. 

Then for every pair of Borel functions $f,g : \R_+ \rightarrow \R_+$ such that $f$ is supported on $A_0$ or $g$ is supported on $B_0$, we have
$$ \tau(f(x)g(y))=\int_{\R_+^2} f(u)g(v) d\nu(u,v).$$
\end{proposition}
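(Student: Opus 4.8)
The plan is to build $\nu$ in three stages and then read off the integration formula. First I would construct $\nu$ on $A_0\times\R_+$ under the temporary assumption $\nu_x(A_0)<\infty$; then on the full region $(A_0\times\R_+)\cup(\R_+\times B_0)$, by decomposing $A_0$ and $B_0$ into countably many pieces of finite measure and gluing; and finally upgrade the rectangle identity $\nu(A\times B)=\tau(\chi_A(x)\chi_B(y))$ to arbitrary positive Borel functions by monotone approximation. At the outset I may assume $\tau$ is faithful: otherwise replace $\cM$ by $z\cM$ and $x,y$ by $zx,zy$ where $z=\supp(\tau)$, which affects none of the quantities in the statement.

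For the finite case I would put $p=\chi_{A_0}(x)$, so that $\tau(p)=\nu_x(A_0)<\infty$ and the compression $p\cM p$ carries the finite faithful normal trace $\tau_p=\tau|_{p\cM p}$. Let $D\subset p\cM p$ be the abelian von Neumann subalgebra generated by $\{\chi_A(x):A\subset A_0\}$, which the spectral theorem identifies with $L^\infty(A_0,\nu_x)$, and let $\E\colon p\cM p\to D$ be the $\tau_p$-preserving conditional expectation. For each Borel $B\subset\R_+$ set $q_B=p\,\chi_B(y)\,p\in(p\cM p)_+$; since $0\le q_B\le p$ one has $\E(q_B)=g_B(x)$ for a Borel function $g_B\colon A_0\to[0,1]$, and normality of $\E$ applied to $\sum_n q_{B_n}=q_{\bigsqcup_n B_n}$ (strong limit, for pairwise disjoint $B_n$) shows that $B\mapsto g_B$ is countably additive modulo $\nu_x$-null sets. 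Since $\R_+$ is a standard Borel space, the existence of regular conditional probabilities then yields a kernel $\kappa\colon A_0\times\mathcal{B}(\R_+)\to[0,1]$ with $\kappa(u,\cdot)$ a measure of total mass $\le1$ for every $u$ and $\kappa(\cdot,B)=g_B$ $\nu_x$-almost everywhere for each $B$. Setting $\nu(C)=\int_{A_0}\kappa(u,C_u)\,d\nu_x(u)$ (with $C_u$ the $u$-section of $C$) defines a finite measure on $A_0\times\R_+$, and for a rectangle $A\times B$ with $A\subset A_0$ one computes, using that $\E$ restricts to the identity on $D$ and is a $D$-bimodule map,
\[ \nu(A\times B)=\int_A g_B\,d\nu_x=\tau_p(\chi_A(x)\,\E(q_B))=\tau_p(\chi_A(x)\,q_B)=\tau(\chi_A(x)\chi_B(y)). \]
Interchanging $x$ and $y$ settles the case $\nu_y(B_0)<\infty$.

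For the general $\sigma$-finite case I would write $A_0=\bigsqcup_n A_0^{(n)}$ and $B_0=\bigsqcup_m B_0^{(m)}$ with all pieces of finite $\nu_x$- resp.\ $\nu_y$-measure, apply the finite case to each, and sum to obtain $\sigma$-finite measures on $A_0\times\R_+$ and on $\R_+\times B_0$ with the correct rectangle values. On each product $A_0^{(n)}\times B_0^{(m)}$ — of finite measure, since $\tau(\chi_{A_0^{(n)}}(x)\chi_{B_0^{(m)}}(y))\le\nu_x(A_0^{(n)})<\infty$ — the rectangles $A\times B$ with $A\subset A_0^{(n)}$, $B\subset B_0^{(m)}$ form a generating $\pi$-system on which the two measures agree, so by Dynkin's $\pi$--$\lambda$ theorem they coincide there; hence they glue, along the partition $(A_0\times\R_+)\sqcup((\R_+\setminus A_0)\times B_0)$ of the region, to a single $\sigma$-finite measure $\nu$ with $\nu(A\times B)=\tau(\chi_A(x)\chi_B(y))$ whenever $A\subset A_0$ or $B\subset B_0$. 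The same $\pi$--$\lambda$ argument, applied to the restrictions of $\nu$ to $A_0\times\R_+$ and to $\R_+\times B_0$ — both $\sigma$-finite — gives uniqueness.

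Finally, to reach arbitrary Borel functions, fix positive Borel $f,g$ with (say) $\supp(f)\subset A_0$, the other case being symmetric, and read $\tau(f(x)g(y))$ as $\sup_k\tau((f\wedge k)(x)(g\wedge k)(y))\in[0,\infty]$. For simple $f=\sum_i c_i\chi_{A_i}$ with $A_i\subset A_0$ and $g=\sum_j d_j\chi_{B_j}$, linearity and the rectangle identity give $\tau(f(x)g(y))=\sum_{i,j}c_id_j\,\nu(A_i\times B_j)=\int fg\,d\nu$; then, approximating general bounded $f,g$ from below by such simple functions and afterwards replacing $f,g$ by $f\wedge k,g\wedge k$, the general case follows from monotone convergence on the integral side and from normality of $\tau$ — in the form $\tau(a_kb)=\tau(b^{1/2}a_kb^{1/2})\uparrow\tau(b^{1/2}ab^{1/2})$ for $0\le a_k\uparrow a$ and bounded $0\le b$ — on the operator side. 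I expect the crux to be the construction of $\nu$ in the finite case: turning the separately-$\sigma$-additive set function $(A,B)\mapsto\tau(\chi_A(x)\chi_B(y))$ into a genuine measure is exactly where the conditional expectation plus disintegration over the standard Borel space $\R_+$ is needed, and where the $\sigma$-finiteness hypothesis is essential (the conclusion fails without it); the gluing and the monotone-convergence bootstrap are then routine.
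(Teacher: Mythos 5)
Your proposal is correct, but there is nothing in the paper to compare it against line by line: the paper states this proposition as a quoted result of Connes (Proposition 1.1 of the injective factors paper) and gives no proof, so your write-up in effect supplies the missing argument. What you do is the classical route, and it works: compress by $p=\chi_{A_0}(x)$ to reduce to a finite trace, push $p\chi_B(y)p$ into the abelian algebra generated by the spectral projections of $x$ via the $\tau_p$-preserving conditional expectation, and use the existence of regular conditional probabilities on the standard Borel space $\R_+$ to turn the countably additive family $B\mapsto g_B$ into a genuine kernel, hence a measure whose rectangle values are $\tau(\chi_A(x)\chi_B(y))$; the $\sigma$-finite patching over $A_0=\bigsqcup_n A_0^{(n)}$, $B_0=\bigsqcup_m B_0^{(m)}$, the $\pi$--$\lambda$ uniqueness (with the exhausting sets $A_0^{(n)}\times\R_+$ and $\R_+\times B_0^{(m)}$, which do have finite measure), and the monotone-convergence upgrade to positive Borel $f,g$ are all sound. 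This is valuable because the statement as used in the paper is a mild refinement of Connes' original (only the restrictions $\nu_x|_{A_0}$, $\nu_y|_{B_0}$ are assumed $\sigma$-finite, which is exactly what is needed since $\nu_x(\{0\})$ may be infinite), and your gluing step covers precisely that refinement. Two small points worth making explicit if this were written out in full: (i) the quantity $\tau(\chi_A(x)\chi_B(y))$ should be interpreted as $\tau\bigl(\chi_B(y)\chi_A(x)\chi_B(y)\bigr)=\tau\bigl(\chi_A(x)\chi_B(y)\chi_A(x)\bigr)\in[0,\infty]$ (the two agree since $\tau(a^*a)=\tau(aa^*)$), which also gives the additivity in each variable that your gluing and simple-function steps use; and (ii) for unbounded $f,g$ the left-hand side $\tau(f(x)g(y))$ needs a definition, and your convention $\sup_k\tau\bigl((f\wedge k)(x)(g\wedge k)(y)\bigr)$ is the natural one and is the one consistent with the uses made of the proposition later in the paper. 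Neither is a gap in the mathematics, only in the bookkeeping.
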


\begin{proposition}\label{prop:existence_mu} Let $x,y$ be a pair of positive elements of $L_2(\cM)$. There exists a unique finite measure $\mu =\mu_{x,y}$ on $[0,1]$ such that for every pair of Borel functions $f,g : \R_+ \rightarrow \R_+$ satisfying $f(0)g(0)=0$, we have

$$ \tau(f(x) g(y)) = \int_0^1 \int_0^\infty  f\left(  \frac{\lambda}{\sqrt{r}} \right) g\left( \frac{1-\lambda}{\sqrt{r}} \right) dr d\mu(\lambda).$$

Moreover, we have
  \begin{align} \label{eq:norm_x}\|x\|_{L_2(\cM)}^2 &= \int \lambda^2 d\mu(\lambda),\\
   \label{eq:norm_y} \|y\|_{L_2(\cM)}^2& = \int (1-\lambda)^2 d\mu(\lambda),\\
   \label{eq:norm_chix-chiy} \| \chi_{(1,\infty)}(x) - \chi_{(1,\infty)}(y)\|_{L_2(\tau)}^2 & =\int |2\lambda - 1| d\mu(\lambda)\\
    \label{eq:scalar_in_L2}\langle x,y\rangle_{L_2(\cM)} & =  \int \lambda(1-\lambda) d\mu(\lambda).
\end{align}
\end{proposition}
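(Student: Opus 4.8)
The plan is to realise $\mu$ as the image, in polar-type coordinates adapted to the core, of the joint spectral measure of $x$ and $y$ furnished by Proposition~\ref{prop:existence_nu}, and then to extract \eqref{eq:norm_x}--\eqref{eq:scalar_in_L2} by feeding well-chosen pairs $(f,g)$ into the resulting integral formula.

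\emph{Step 1: applying Connes' proposition.} Both $x$ and $y$ are positive operators affiliated with $c(\cM)$, so the only hypothesis to verify is that the spectral measures $\nu_x,\nu_y$ (relative to $\tau$) are $\sigma$-finite on $A_0=B_0=(0,\infty)$. This is where the defining property of Haagerup's $L_2(\cM)$ enters: for $\varepsilon>0$ we have $\chi_{(\varepsilon,\infty)}(x)=\chi_{(1,\infty)}(\varepsilon^{-1}x)$ and $\varepsilon^{-1}x\in L_2(\cM)^+$, hence $\nu_x\big((\varepsilon,\infty)\big)=\tau\big(\chi_{(1,\infty)}(\varepsilon^{-1}x)\big)=\varepsilon^{-2}\|x\|_{L_2(\cM)}^2<\infty$; writing $(0,\infty)=\bigcup_n(1/n,\infty)$ gives the $\sigma$-finiteness, and similarly for $y$. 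Proposition~\ref{prop:existence_nu} then yields a $\sigma$-finite measure $\nu$ on $\R_+^2\setminus\{(0,0)\}$ with
\[\tau(f(x)g(y))=\int_{\R_+^2}f(u)g(v)\,d\nu(u,v)\]
for all Borel $f,g\colon\R_+\to\R_+$ with $f(0)g(0)=0$ (the condition ``$f$ supported on $A_0$ or $g$ supported on $B_0$'' being exactly $f(0)g(0)=0$ here).

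\emph{Step 2: homogeneity and change of variables.} Since $x,y\in L_2(\cM)$ we have $\theta_s(x)=e^{-s/2}x$, $\theta_s(y)=e^{-s/2}y$, while $\tau\circ\theta_s=e^{-s}\tau$. Applying $\theta_s$ inside $\tau(\chi_A(x)\chi_B(y))$ and using $\theta_s(\chi_A(x))=\chi_{e^{s/2}A}(x)$, the defining identity for $\nu$ (with $s=2\log t$) gives at once the homogeneity relation $\nu(tA\times tB)=t^{-2}\nu(A\times B)$ for all $t>0$, i.e.\ $(S_t)_*\nu=t^2\nu$ for the scaling $S_t(u,v)=(tu,tv)$. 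I would then transport $\nu$ through the homeomorphism $\Phi\colon(0,\infty)\times[0,1]\to\R_+^2\setminus\{(0,0)\}$, $\Phi(r,\lambda)=(\lambda/\sqrt r,(1-\lambda)/\sqrt r)$, under which $S_t$ becomes $(r,\lambda)\mapsto(r/t^2,\lambda)$. Writing $\tilde\nu=\Phi^{-1}_*\nu$ and passing to $s=\log r$, the homogeneity of $\nu$ becomes precisely translation invariance in $s$ of the $\sigma$-finite measure $e^{-s}\,d\tilde\nu$; since a $\sigma$-finite measure on $\R\times[0,1]$ invariant under translation in the first coordinate must be of the form $ds\otimes\mu$ for a $\sigma$-finite Borel measure $\mu$ on $[0,1]$, we get $\tilde\nu=dr\otimes\mu$, and unwinding $\Phi$ gives
\[\tau(f(x)g(y))=\int_0^1\int_0^\infty f\!\left(\tfrac{\lambda}{\sqrt r}\right)g\!\left(\tfrac{1-\lambda}{\sqrt r}\right)dr\,d\mu(\lambda).\]
Uniqueness of $\mu$ reduces to the uniqueness of $\nu$ in Proposition~\ref{prop:existence_nu} together with the (obvious) uniqueness of the factorisation $ds\otimes\mu$.

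\emph{Step 3: the four identities.} Each follows by a specialisation. For \eqref{eq:norm_x} take $f=\chi_{(1,\infty)}$, $g\equiv1$, use $\|x\|_{L_2(\cM)}^2=\tau(\chi_{(1,\infty)}(x))$ and $\int_0^\infty\chi_{(1,\infty)}(\lambda/\sqrt r)\,dr=\lambda^2$; \eqref{eq:norm_y} is symmetric. For \eqref{eq:norm_chix-chiy}, with $p=\chi_{(1,\infty)}(x)$, $q=\chi_{(1,\infty)}(y)$ (projections of finite $\tau$-trace), expand $\|p-q\|_{L_2(\tau)}^2=\tau(p)+\tau(q)-2\tau(pq)$, compute $\tau(pq)$ via $f=g=\chi_{(1,\infty)}$ (which gives $\int\min(\lambda,1-\lambda)^2\,d\mu$), and use the elementary identity $\lambda^2+(1-\lambda)^2-2\min(\lambda,1-\lambda)^2=|2\lambda-1|$. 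For \eqref{eq:scalar_in_L2}, write $\langle x,y\rangle_{L_2(\cM)}=\tr(xy)$ and invoke Lemma~\ref{lem:tr_HaagerupLp} with $p=2$ to rewrite it as $\tau\big(x^{-1}\chi_{(1,\infty)}(x)\,y\big)$, i.e.\ apply the integral formula with $f(u)=u^{-1}\chi_{(1,\infty)}(u)$ and $g(v)=v$; integrating in $r$ gives $\int\lambda(1-\lambda)\,d\mu$. Finally $\mu$ is finite since $\lambda^2+(1-\lambda)^2\geq\tfrac12$ on $[0,1]$, so \eqref{eq:norm_x}--\eqref{eq:norm_y} force $\mu([0,1])\leq2(\|x\|_{L_2(\cM)}^2+\|y\|_{L_2(\cM)}^2)<\infty$.

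The step requiring genuine care is Step 2: correctly combining the modular scaling $\theta_s(x)=e^{-s/2}x$ with the trace scaling $\tau\circ\theta_s=e^{-s}\tau$ to get the homogeneity of $\nu$, and then justifying the reduction of a translation-quasi-invariant $\sigma$-finite measure to the product form $dr\otimes\mu$ after the change of variables. Everything else is bookkeeping with positive Borel functions.
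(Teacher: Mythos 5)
Your argument is correct and follows essentially the same route as the paper's proof: invoke Proposition~\ref{prop:existence_nu} on $A_0=B_0=(0,\infty)$, derive the homogeneity $\nu(tA\times tB)=t^{-2}\nu(A\times B)$ from \eqref{eq:tautheta}, pull back through the same change of variables to factor the measure as $dr\otimes\mu$, and then specialise $f,g$ (including $f(u)=u^{-1}\chi_{(1,\infty)}(u)$ via Lemma~\ref{lem:tr_HaagerupLp} for \eqref{eq:scalar_in_L2}). The only differences are cosmetic: you obtain \eqref{eq:norm_chix-chiy} by expanding $\tau(p)+\tau(q)-2\tau(pq)$ rather than integrating $|1_{r<\lambda^2}-1_{r<(1-\lambda)^2}|^2$ directly, and you deduce finiteness of $\mu$ from \eqref{eq:norm_x}--\eqref{eq:norm_y} rather than from the Radon property of $\nu$.
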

\begin{proof}
We apply Proposition \ref{prop:existence_nu}. Note that the spectral measures $\nu_x$ and $\nu_y$ are finite on $[\varepsilon,+\infty)$ for every $\varepsilon > 0$. In particular, there are $\sigma$-finite on $\R_+^*$ (but not necessarily on $\R_+$ since we have $\nu_x(\{0\})=\infty$ if $\supp(x) \neq 1$).  Therefore, by Proposition \ref{prop:existence_nu}, there is a unique $\sigma$-finite measure $\nu$ on $\R_+^2 \setminus \{ (0,0) \}$ such that
  \begin{equation}\label{eq:def_nu} \tau(f(x)g(y))=\int f(u)g(v) d\nu(u,v)
    \end{equation}
  for every pair of Borel functions $f,g : \R_+ \rightarrow \R_+$ such that $f(0)g(0)=0$. In fact $\nu$ is even a Radon measure on the locally compact space $\R^2_+ \setminus \{ (0,0) \}$ since it is finite on $([\varepsilon,+\infty) \times \R_+) \cup (\R_+ \times [\varepsilon,+\infty))$ for every $\varepsilon > 0$.

By \eqref{eq:tautheta}, we have 
$$e^{-2s}\tau(f(x)g(y))=\tau( \theta_{2s}(f(x)g(y)))=\tau(f(e^{-s}x)g(e^{-s}y)$$
Since $\nu$ is uniquely characterized by \ref{eq:def_nu}, we get $d\nu(e^su,e^sv)=e^{-2s} d \nu(u,v)$ for all $s \in \R$.
Let $\rho$ be the Radon measure on $[0,1] \times \R^*_+$ obtained as the pull-back of $\nu$ by the homeomorphism $$[0,1] \times \R^*_+ \ni (\lambda,r) \mapsto \left(\frac{\lambda}{\sqrt{r}},\frac{1-\lambda}{\sqrt{r}}\right)  \in \R^2_+ \setminus \{ (0,0) \}.$$
Then we have $d\rho(\lambda, e^{s}r)=e^{s} d\rho(\lambda,r)$. Therefore, $\rho$ must be of the form $\rho = \mu \otimes dr$ for some finite measure $\mu$ on $[0,1]$. Then a change of variable in equation \ref{eq:def_nu} allows us to conclude that
$$ \tau(f(x) g(y)) = \int_0^1 \int_0^\infty  f\left(  \frac{\lambda}{\sqrt{r}} \right) g\left( \frac{1-\lambda}{\sqrt{r}} \right) dr d\mu(\lambda)$$
for every pair of Borel functions $f,g : \R_+ \rightarrow \R_+$ satisfying $f(0)g(0)=0$.

Now, we have to prove the second part. The equations \eqref{eq:norm_x}, \eqref{eq:norm_y} and \eqref{eq:norm_chix-chiy} are immediate. For example, taking $f=\chi_{(1,\infty)}$ and $g=1$, we obtain
\[ \|x\|_{L_2(\cM)}^2 = \tau(\chi_{(1,\infty)}(x))= \int_0^1 \int_0^\infty 1_{r<\lambda^2} dr d\mu(\lambda) = \int_0^1 \lambda^2 d\mu(\lambda).\]
This proves \eqref{eq:norm_x}. The equality \eqref{eq:norm_y} is proved in the same way. For \eqref{eq:norm_chix-chiy}, take $f=g=\chi_{(1,\infty)}$:
\begin{align*}
  \| \chi_{(1,\infty)}(x)-\chi_{(1,\infty)}(y)\|_{L_2(\tau)}^2 & =\int \int_0^\infty |1_{r<\lambda^2} - 1_{r<(1-\lambda)^2}|^2 dr d\mu(\lambda)\\
    & = \int_0^1 |\lambda^2 - (1-\lambda)^2| d\mu(\lambda) \\
    & = \int_0^1 |2\lambda - 1| d\mu(\lambda).
\end{align*}

The last equality \eqref{eq:scalar_in_L2} is slightly more involved. By Lemma~\ref{lem:tr_HaagerupLp}, if $f(t) = t^{-1} \chi_{(1,\infty)}(t)$, we obtain
\[  \langle x,y\rangle_{L_2(\cM)}  = \tr(xy) = \tau( f(x) y).\]
This is equal to
\begin{align*} \int_0^1 \int_0^\infty f\left( \frac \lambda {\sqrt r} \right) \frac{1-\lambda}{\sqrt r} d\mu(\lambda) &= \int_0^1 \int_0^{\lambda^2} \frac{\sqrt{r}}{\lambda} \frac{1-\lambda}{\sqrt r} dr d\mu(\lambda)\\
  & = \int_0^1 \lambda(1-\lambda) d\mu(\lambda).\qedhere
\end{align*}
%% Old proof (lacking justification): the idea is to write $2\langle x,y\rangle_{L_2(\cM)}$ as the derivative at $0$ of the function $\varepsilon \mapsto \|x+\varepsilon y\|_{L_2(\cM)}^2$ and to express this derivative in terms of the measure $\mu$.
%%   To do so, consider $\rho \in C_c(0,\infty)$ nonnegative with integral $1$, and define $f(t) = \int_0^t a \rho(a) da$. By Lemma~\ref{eq:Lpnorm}, we have
%%   \[ \|x+\varepsilon y\|_{L_2(\cM)}^2 = \tau( f(|x+\varepsilon y|^2)).\]
%%   It is a general fact that (UNDER THE CORRECT HYPOTHESES on the smooth functions $g:\R\to \R$ and $\gamma:\R \to \widetilde{c(\cM)}$)
%%   \[ \frac{d}{dt} \tau(g(\gamma(t))) = \tau(\gamma'(t) g'(\gamma(t))\]
%%   Applying this to $\gamma(t) = x+ty$ and $g(t) = f(t^2)$ for which $g'(t) = 2 t^3 \rho(t)$, we obtain
%%   \[ \frac{d}{d\varepsilon}\left|_{\varepsilon =0}\right. \|x+\varepsilon y\|_{L_2(\cM)}^2 = \tau( 2 x^3 \rho(x^2) y).\]
%%   Using Proposition~\ref{prop:existence_mu}, we obtain
%%   \[\langle x,y\rangle_{L_2(\cM)} =\int_0^1 \int_0^\infty \frac{\lambda^3 (1-\lambda)}{r^2} \rho\left(\frac \lambda{\sqrt{r}}\right) dr d\mu(\lambda).\]
%% But by the change of variable $a=\frac{\lambda^2}{r}$, 
%% \[ \int_0^\infty \frac{\lambda^3 (1-\lambda)}{r^2} \rho\left(\frac \lambda{\sqrt{r}}\right) dr = \int_0^\infty \lambda(1-\lambda) \rho(a) da = \lambda(1-\lambda)\]
%% and \eqref{eq:scalar_in_L2} follows.
\end{proof}

\begin{proof}[Proof of Proposition~\ref{prop:Connes_inequality}] By Proposition~\ref{prop:existence_mu},
  \begin{align*} \|x-y\|_{L_2(\cM)}^2 &= \|x\|_{L_2(\cM)}^2 + \|y\|_{L_2(\cM)}^2 - 2 \langle x,y\rangle_{L_2(\cM)}\\
    & = \int (\lambda^2 + (1-\lambda)^2 - 2 \lambda(1-\lambda)) d\mu(\lambda) = \int (1-2\lambda)^2 d\mu(\lambda).
  \end{align*}
  In the same way,
  \[ \|x+y\|_{L_2(\cM)}^2  = \int 1 d\mu(\lambda).\]
  
So the claimed inequality is equivalent to 
\[ \int (1-2\lambda)^2 d\mu(\lambda) \leq \int |2\lambda - 1| d\mu(\lambda) \leq \sqrt{ \int (1-2\lambda)^2 d\mu(\lambda) \int 1 d\mu(\lambda)}.\]
This first inequality is clear because $|1-2\lambda| \leq 1$ on $[0,1]$. The second inequality is the Cauchy-Schwarz inequality.
\end{proof}

A particular case of Proposition~\ref{prop:Connes_inequality} is the following. Here and in the whole note, we write $[x,y]$ for the commutator $xy-yx$. Also, a \defi{partition of the unity} or \defi{PVM} in $\cM$ is a finite family $(p_1,\dots,p_n)$ of self-adjoint projections that sum to $1$. A finite family $(p_1,\dots,p_n)$ of positive operators that sum to $1$ is called a \defi{POVM}.
\begin{corollary}\label{cor:Connes_partition_of_unity}
  Let $x \in L_2(\cM)^+$ of norm $1$, and $(p_1,\dots,p_n) \subset \cM$ be a partition of the unity. Let $q = \chi_{(1,\infty)}(x)$. We have
  \[ \sum_k \|[p_k,x]\|_{L_2(\cM)}^2 \leq \sum_k \|[p_k, q]\|_{L_2(\tau)}^2 \leq 2 \|x\|_{L_2(\cM)} \Big(\sum_k \|[p_k, x]\|_{L_2(\cM)}^2\Big)^{\frac 1 2}.\]
\end{corollary}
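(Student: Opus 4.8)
The plan is to first establish the statement for a single unitary $u\in\cM$ — where it is a direct consequence of Proposition~\ref{prop:Connes_inequality} — and then to recover the version for a partition of the unity by an averaging argument over a torus.

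First I would record the following unitary variant: for every unitary $u\in\cM$,
\[ \|[u,x]\|_{L_2(\cM)}^2\ \leq\ \|[u,q]\|_{L_2(\tau)}^2\ \leq\ 2\|x\|_{L_2(\cM)}\,\|[u,x]\|_{L_2(\cM)}. \]
To see this, note that $u$ is fixed by the modular flow (it lies in $\cM=c(\cM)^\theta$), so $\mathrm{Ad}(u)\colon z\mapsto uzu^*$ is a $*$-automorphism of $c(\cM)$ commuting with $\theta$ and preserving $\tau$; hence it preserves both $L_2(\cM)$ and $L_2(\tau)$, acts isometrically on each, and commutes with Borel functional calculus. In particular $u^*xu\in L_2(\cM)^+$ has the same $L_2(\cM)$-norm as $x$ and $\chi_{(1,\infty)}(u^*xu)=u^*qu$; also $\tau(q)=\tau(\chi_{(1,\infty)}(x))=\|x\|_{L_2(\cM)}^2=1$ by definition of the Haagerup norm, so $q$ is $\tau$-finite and $[u,q]\in L_2(\tau)$ is legitimate. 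Applying Proposition~\ref{prop:Connes_inequality} to the pair $x,u^*xu$, and using the identities $x-u^*xu=u^*[u,x]$ and $q-u^*qu=u^*[u,q]$, the isometry of left multiplication by $u^*$, and $\|x+u^*xu\|_{L_2(\cM)}\leq 2\|x\|_{L_2(\cM)}$, yields the displayed inequality.

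Next I would average. Let $\mathbf{T}=\{z\in\C:|z|=1\}$ with its normalized Haar measure $dt$, and for $t=(t_1,\dots,t_n)\in\mathbf{T}^n$ set $u_t=\sum_k t_k p_k$; since $(p_1,\dots,p_n)$ is a PVM this is a unitary of $\cM$ and $[u_t,z]=\sum_k t_k[p_k,z]$ for every $z$. Because $\int_{\mathbf{T}^n}\overline{t_j}t_k\,dt=\delta_{j,k}$, expanding the square yields $\int_{\mathbf{T}^n}\|[u_t,z]\|^2\,dt=\sum_k\|[p_k,z]\|^2$ for the Hilbert norm of $L_2(\cM)$ and for that of $L_2(\tau)$. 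Integrating the unitary inequality over $t\in\mathbf{T}^n$ then turns its left- and middle terms into $\sum_k\|[p_k,x]\|_{L_2(\cM)}^2$ and $\sum_k\|[p_k,q]\|_{L_2(\tau)}^2$ respectively, while the right-hand side is controlled, via the Cauchy--Schwarz inequality on the probability space $(\mathbf{T}^n,dt)$, by $2\|x\|_{L_2(\cM)}\big(\sum_k\|[p_k,x]\|_{L_2(\cM)}^2\big)^{1/2}$. This is exactly the claimed chain of inequalities.

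The main obstacle I anticipate is not in any single step but in the bookkeeping of the first one: one must check carefully that conjugation by a unitary of $\cM$ is compatible with the whole Connes--Tomita--Takesaki machinery — that it preserves Haagerup's $L_2(\cM)$ and the tracial $L_2(c(\cM),\tau)$, acts isometrically on both, and intertwines the spectral projections $\chi_{(1,\infty)}$. This is standard precisely because $u$ is $\theta$-fixed, but it deserves to be written out; once it is in hand, everything reduces to Proposition~\ref{prop:Connes_inequality} and routine Hilbert-space estimates.
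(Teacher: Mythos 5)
Your proof is correct and follows essentially the same route as the paper: reduce to a single unitary via Proposition~\ref{prop:Connes_inequality} applied to the pair $(x,u^*xu)$, then recover the PVM statement by averaging over unitaries built from the $p_k$ using orthogonality of characters (the paper averages over the powers of the single unitary $U=\sum_k e^{2i\pi k/n}p_k$, i.e.\ over $\Z/n\Z$, rather than over $\mathbf{T}^n$, but this is only a cosmetic difference). The compatibility checks you flag (that $\mathrm{Ad}(u)$ for $u\in\cM$ preserves $L_2(\cM)$, $L_2(\tau)$ and the functional calculus) are indeed the only points needing care, and they hold for exactly the reason you give.
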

\begin{proof} Let $U = \sum_k e^{\frac{2i\pi k}{n}} p_k \in U(\cM)$. Then by the orthogonality of the characters of $\Z/n\Z$, we have
\[\frac{1}{n} \sum_{k=1}^n \|U^k x U^{-k} - x\|_{L_2(\cM)}^2 = \frac{1}{n} \sum_{k=1}^n \|[U^k, x]\|_{L_2(\cM)}^2 =\sum_k \|[p_k ,x]\|_{L_2(\cM)}^2.\]
and similarly
\[ \frac{1}{n} \sum_{k=1}^n \|U^k q U^{-k} - q\|_{L_2(\tau)}^2 =\sum_k \|[p_k, q]\|_{L_2(\tau)}^2.\]
So the corollary follows from Proposition~\ref{prop:Connes_inequality} and the Cauchy-Schwarz inequality, because $$\left(\frac{1}{n} \sum_{k=1}^n \|U^k x U^{-k} + x\|_{L_2(\cM)}^2\right)^{\frac 1 2} \leq 2 \|x\|_{L_2(\cM)}.$$
\end{proof}

\section{Applications to correlations}
Let us introduce some notation. If $\cM \subset B(\cH)$ is a von Neumann algebra and $\xi \in \cH$ is a unit vector, define $q_\xi \in c(\cM)$ by $q_\xi =  \chi_{(1,\infty)}(h_\xi)$, where $h_\xi \in L_1(\cM)$ is the element corresponding to the the vector state $\langle \cdot \xi,\xi\rangle$ on $\cM$ through the isomorphism between $\cM_*$ and $L_1(\cM)$ described previously. It is a trace $1$ projection, because $\tau(q_\xi) =  \tr(h_\xi) = \langle 1 \xi,\xi\rangle=1$.

  \begin{theorem}\label{thm:almost_synchronous_correlations} Let $\cM\subset B(\cH)$ be a von Neumann algebra, $\xi \in \cH$ be a unit vector.

    Let $X,A$ be finite sets, $\nu$ be a symmetric probability measure on $X\times X$ with marginal $\mu$. For every $x \in X$, let $(p^x_a)_{a \in A} \subset \cM$ and $(\tilde{p}^x_a)_{a \in A} \subset \cM'$ be partitions of the unity.

    If $\int \sum_{a \in A} \langle p^x_a \tilde{p}^x_a \xi,\xi \rangle d\mu(x) \geq 1-\delta$, then
    \[ \int \sum_{a,b \in A} \Big|\langle p^x_a \tilde{p}^y_b \xi,\xi\rangle - \tau( q_\xi p^x_a q_\xi p^y_b q_\xi)\Big| d\nu(x,y) \lesssim \delta^{\frac 1 4}.\]

    Moreover, for every $x\in X$ there is a partition of the unity $(r^x_a)_{a \in A} \subset q_\xi c(\cM) q_\xi$ such that 
    \[ \int \sum_{a,b \in A} \Big|\tr( p^x_a h^{\frac 1 2} p^y_b h^{\frac 1 2}) - \tau( r^x_a r^y_b)\Big| d\nu(x,y) \lesssim \delta^{\frac 1 4}.\]
  \end{theorem}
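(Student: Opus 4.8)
The plan is to transport the whole picture into Haagerup's Hilbert space $L_2(\cM)$ and reduce everything to the comparison between $\|\cdot\|_{L_2(\cM)}$ and $\|\cdot\|_{L_2(\tau)}$ furnished by Corollary~\ref{cor:Connes_partition_of_unity}. I would first reduce to the case where $\xi$ is cyclic and separating for $\cM$ — this is harmless, since $h_\xi$, $q_\xi$, $c(\cM)$ and $\tau$ depend only on the pair $(\cM,\langle\,\cdot\,\xi,\xi\rangle|_{\cM})$ — so that $\cH$ is identified with $L_2(\cM)$, $\xi$ with $\eta:=h_\xi^{1/2}\in L_2(\cM)^+$ (hence $\|\eta\|_{L_2(\cM)}=1$ and $q_\xi=\chi_{(1,\infty)}(\eta)$), $\cM'$ with $J\cM J$, and each $\tilde p^x_a$ with $Jq^x_aJ$ for a partition of the unity $(q^x_a)_a\subset\cM$. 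Writing $P^x_a:=\eta^{1/2}p^x_a\eta^{1/2}$ and $Q^x_a:=\eta^{1/2}q^x_a\eta^{1/2}$, which are positive elements of $L_2(\cM)$ with $\sum_a P^x_a=\sum_a Q^x_a=\eta$, one checks directly from the trace property the two exact identities $\langle p^x_a\tilde p^y_b\xi,\xi\rangle=\langle P^x_a,Q^y_b\rangle_{L_2(\cM)}$ and $\tr(p^x_a h^{1/2}p^y_b h^{1/2})=\langle P^x_a,P^y_b\rangle_{L_2(\cM)}$. So the two correlations we must compare to $\tau(q_\xi p^x_a q_\xi p^y_b q_\xi)$ differ only by replacing $Q^y_b$ by $P^y_b$.

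The crux is the bound $\int\sum_a\|[p^x_a,\eta]\|_{L_2(\cM)}^2\,d\mu(x)\le 4\delta$ (and the same with $q^x_a$). From the first identity with $y=x$, $b=a$ one gets $\langle p^x_a\tilde p^x_a\xi,\xi\rangle=\langle P^x_a,Q^x_a\rangle_{L_2(\cM)}$, and a short computation gives $\|[p^x_a,\eta]\|_{L_2(\cM)}^2=2\langle p^x_a\xi,\xi\rangle-2\|P^x_a\|_{L_2(\cM)}^2$, hence $\sum_a\|[p^x_a,\eta]\|_{L_2(\cM)}^2=2-2\sum_a\|P^x_a\|_{L_2(\cM)}^2$. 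Now positivity of the $P^x_a$ forces $\langle P^x_a,P^x_b\rangle_{L_2(\cM)}\ge 0$, so $\sum_a\|P^x_a\|^2\le\|\sum_a P^x_a\|^2=\|\eta\|^2=1$, and likewise $\sum_a\|Q^x_a\|^2\le 1$; Cauchy–Schwarz then gives $\sum_a\langle P^x_a,Q^x_a\rangle\le\tfrac12\sum_a(\|P^x_a\|^2+\|Q^x_a\|^2)\le\tfrac12(\sum_a\|P^x_a\|^2+1)$, i.e. $\sum_a\|P^x_a\|^2\ge 2\sum_a\langle P^x_a,Q^x_a\rangle-1$. Therefore $\sum_a\|[p^x_a,\eta]\|_{L_2(\cM)}^2\le 4\big(1-\sum_a\langle p^x_a\tilde p^x_a\xi,\xi\rangle\big)$ for every $x$, and integrating against $\mu$ and using the hypothesis finishes the crux; the $p\leftrightarrow q$ symmetry of $\langle P^x_a,Q^x_a\rangle$ gives the bound for the $q^x_a$ as well.

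Feeding this into Corollary~\ref{cor:Connes_partition_of_unity} with $x=\eta$ yields, after Jensen's inequality, $\int\sum_a\|[p^x_a,q_\xi]\|_{L_2(\tau)}^2\,d\mu\lesssim\sqrt\delta$, and similarly for $(q^x_a)$. To finish the first claim I would, starting from $\langle P^x_a,Q^y_b\rangle_{L_2(\cM)}$, apply Lemma~\ref{lem:tr_HaagerupLp} to rewrite this Haagerup trace as a core trace involving $\chi_{(1,\infty)}(P^x_a)$, replace $\chi_{(1,\infty)}(P^x_a)$ by $q_\xi p^x_a q_\xi$ and $\chi_{(1,\infty)}(Q^y_b)$ by $q_\xi q^y_b q_\xi$ using Proposition~\ref{prop:Connes_inequality} together with the commutator bounds (the errors being controlled in $\|\cdot\|_{L_2(\tau)}$), and finally replace $q^y_b$ by $p^y_b$ using that $\|\tilde p^y_b\xi-p^y_b\xi\|$ is small on average together with the $q_\xi$-commutator bounds; to keep all constants independent of $|A|$, at the step where one sums over $a$ (resp.\ $b$) I would use the cyclic-group unitaries $\sum_a e^{2\pi i a/|A|}p^x_a$ exactly as in the proof of Corollary~\ref{cor:Connes_partition_of_unity}. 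The same reorganization applied to $\langle P^x_a,P^y_b\rangle_{L_2(\cM)}$, combined with a standard rounding of the near-orthogonal POVM $(q_\xi p^x_a q_\xi)_a$ in the finite algebra $q_\xi c(\cM)q_\xi$ to a genuine partition of the unity $(r^x_a)_a$ at $\|\cdot\|_{L_2(\tau)}$-distance $\lesssim\sqrt\delta$ (on average), gives the second claim.

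The main obstacle is precisely this last paragraph: converting the $L_2$-small commutator estimates into the asserted $\ell^1$-in-$(a,b)$ inequality with an absolute constant and the exponent $\tfrac14$ — in particular the alphabet-free justification that $\chi_{(1,\infty)}(\eta^{1/2}p^x_a\eta^{1/2})$ may be replaced by $q_\xi p^x_a q_\xi$ and $q^y_b$ by $p^y_b$ at the level of the core trace, and the POVM rounding lemma. The reduction to $\xi$ cyclic and separating for $\cM$ also deserves a careful, though routine, justification.
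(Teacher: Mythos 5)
Your first half is essentially the paper's argument and is correct: the identity $\sum_a\|[p^x_a,h_\xi^{1/2}]\|_{L_2(\cM)}^2=2-2\sum_a\|h_\xi^{1/4}p^x_ah_\xi^{1/4}\|_{L_2(\cM)}^2$ together with positivity of the sandwiched elements and Cauchy--Schwarz gives $\int\sum_a\|[p^x_a,h_\xi^{1/2}]\|_{L_2(\cM)}^2\,d\mu\le 4\delta$, and Corollary~\ref{cor:Connes_partition_of_unity} transfers this to $\int\sum_a\|[p^x_a,q_\xi]\|_{L_2(\tau)}^2\,d\mu\lesssim\sqrt\delta$. Two caveats even here. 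First, the reduction to $\xi$ cyclic and separating is \emph{not} harmless in the form you state: cutting down by $[\cM\xi]\in\cM'$ and $[\cM'\xi]\in\cM$ cannot in general preserve both families of \emph{projections}; one of the two sides degrades to a POVM. The paper sidesteps this by never touching $\cH$: it writes $\langle\,\cdot\,\tilde p^y_b\xi,\xi\rangle=\tr(\,\cdot\,h_\xi^{1/2}p'^y_bh_\xi^{1/2})$ for a POVM $(p'^y_b)_b\subset\cM$ obtained from $0\le h(\xi,y,b)\le h_\xi$ in $L_1(\cM)$. Your crux computation survives with POVMs, but the claim ``$\tilde p^x_a=Jq^x_aJ$ for a partition of the unity'' should be weakened accordingly.

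The genuine gap is the last paragraph, which you yourself flag as the obstacle: it contains the actual content of the theorem and your proposed mechanism for it does not work. Proposition~\ref{prop:Connes_inequality} compares $\chi_{(1,\infty)}(x)$ and $\chi_{(1,\infty)}(y)$ for two positive elements of $L_2(\cM)$, but $q_\xi p^x_aq_\xi$ is not the spectral projection of any $L_2(\cM)$ element (it is not even a projection), so there is no way to ``replace $\chi_{(1,\infty)}(h_\xi^{1/4}p^x_ah_\xi^{1/4})$ by $q_\xi p^x_aq_\xi$'' via that proposition. The bridge between the two worlds is instead the \emph{exact} identity $\tr(h_\xi\,z)=\tau(q_\xi z)$ for $z\in\cM_+$ (Lemma~\ref{lem:tr_HaagerupLp} with $p=1$), which gives $\tr(h_\xi p^x_ap^y_bp^x_a)=\tau(q_\xi p^x_ap^y_bp^x_a)$ for free. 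The remaining comparisons are then between the squared norms $\|p^x_ah_\xi^{1/2}p^y_b\|_{L_2(\cM)}^2$, $\|h_\xi^{1/2}p^x_ap^y_b\|_{L_2(\cM)}^2$, $\|q_\xi p^x_ap^y_b\|_{L_2(\tau)}^2$ and $\|p^x_aq_\xi p^y_b\|_{L_2(\tau)}^2$, and the alphabet-free $\ell^1$-in-$(x,y,a,b)$ control comes from Lemma~\ref{lem:Hilbert_space} applied on $X\times X\times A\times A$ (the inequality $\int\bigl|\|\xi(\omega)\|^2-\|\eta(\omega)\|^2\bigr|\,dm\le\|\xi-\eta\|\,\|\xi+\eta\|$), fed with the commutator bounds. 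The cyclic-group-unitaries trick you invoke only controls sums of squared commutator norms, not these $\ell^1$ sums of differences of correlations, so it cannot substitute for Lemma~\ref{lem:Hilbert_space}. Finally, the POVM rounding you call ``standard'' is a genuine input (the paper cites a quantitative orthogonalization theorem); as long as you are willing to cite such a result, that part of your sketch is fine.
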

Given Theorem~\ref{prop:Connes_inequality}, the proof is essentially the same as the proof of \cite[Theorem 3.1]{MR4373849}, but we write the proof for completeness.

  We shall use the following elementary Hilbert space lemma.
  \begin{lemma}\label{lem:Hilbert_space} Let $\cK$ be a Hilbert space, $(\Omega,m)$ be a measure space and $\xi,\eta \in L_2(\Omega,m;\cK)$. Then
    \[ \int \big| \|\xi(\omega)\|^2 - \|\eta(\omega)\|^2\big| dm(\omega) \leq \|\xi - \eta\| \|\xi+\eta\|.\]
%% If $\xi,\eta$ have norm $1$, 
%%     \[ \int \big| \|\xi(\omega)\|^2 - \|\eta(\omega)\|^2\big| dm(\omega) \leq 2 \sqrt{1-\langle \xi,\eta\rangle^2}.\]
  \end{lemma}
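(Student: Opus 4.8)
The plan is to establish the elementary pointwise inequality $\big|\,\|u\|^2-\|v\|^2\,\big|\le\|u-v\|\,\|u+v\|$ valid for all $u,v\in\cK$, and then to integrate it over $\Omega$ with one further application of the Cauchy--Schwarz inequality.

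For the pointwise step, I would expand $\langle u-v,\,u+v\rangle=\|u\|^2-\|v\|^2+\langle u,v\rangle-\langle v,u\rangle$. The last two terms are purely imaginary, so taking real parts gives the identity $\|u\|^2-\|v\|^2=\mathrm{Re}\,\langle u-v,\,u+v\rangle$ (over $\R$ there is nothing to take real parts of). Hence, by the Cauchy--Schwarz inequality in $\cK$,
\[ \big|\,\|u\|^2-\|v\|^2\,\big|=\big|\mathrm{Re}\,\langle u-v,\,u+v\rangle\big|\le\|u-v\|\,\|u+v\|. \]
Specializing $u=\xi(\omega)$, $v=\eta(\omega)$ and integrating in $\omega$ produces
\[ \int\big|\,\|\xi(\omega)\|^2-\|\eta(\omega)\|^2\,\big|\,dm(\omega)\le\int\|\xi(\omega)-\eta(\omega)\|\,\|\xi(\omega)+\eta(\omega)\|\,dm(\omega). \]

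To conclude, I would apply the Cauchy--Schwarz inequality in $L_2(\Omega,m)$ to the two nonnegative measurable functions $\omega\mapsto\|\xi(\omega)-\eta(\omega)\|$ and $\omega\mapsto\|\xi(\omega)+\eta(\omega)\|$, bounding the right-hand side above by
\[ \Big(\int\|\xi(\omega)-\eta(\omega)\|^2\,dm(\omega)\Big)^{1/2}\Big(\int\|\xi(\omega)+\eta(\omega)\|^2\,dm(\omega)\Big)^{1/2}=\|\xi-\eta\|\,\|\xi+\eta\|, \]
the last equality being the definition of the norm on $L_2(\Omega,m;\cK)$. There is no real obstacle here: the statement is a two-line exercise, and the only point needing the slightest care is passing to real parts in the first step when the scalar field is $\C$; everything else is Cauchy--Schwarz applied twice, once in $\cK$ and once in $L_2(\Omega,m)$.
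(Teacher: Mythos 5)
Your proof is correct and follows essentially the same route as the paper's: a pointwise Cauchy--Schwarz bound in $\cK$ followed by Cauchy--Schwarz in $L_2(\Omega,m)$. The only (cosmetic) difference is in the pointwise step, where you use the identity $\|u\|^2-\|v\|^2=\Re\langle u-v,u+v\rangle$ directly, while the paper squares both sides and reduces to Cauchy--Schwarz; your version is if anything slightly cleaner.
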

  \begin{proof} For any vectors in a Hilbert space, the inequality 
    \[ \big| \|u\|^2 - \|v\|^2\big| \leq \|u-v\| \|u+v\|\]
    is easy: squaring both sides, it is equivalent to
    \[ \|u\|^4+\|v\|^4 - 2 \|u\|^2 \|v\|^2 \leq \|u\|^4+\|v\|^4 + 2 \|u\|^2 \|v\|^2 - 4 |\Re\langle u,v\rangle|^2,\]
    which is just the Cauchy-Schwarz inequality. Taking $u=\xi(\omega)$ and $v=\eta(\omega)$, integrating and using the Cauchy-Schwarz inequality in $L_2(\Omega,m)$, we deduce the  lemma. %The second inequality is immediate, because $\|\xi - \eta\| \|\xi+\eta\| = 2\sqrt{1-\langle \xi,\eta\rangle^2}$ whenever $\xi,\eta$ have norm $1$.
    \end{proof}
The next lemma is the particular case of Theorem~\ref{thm:almost_synchronous_correlations} for strategies that are symmetric.
  \begin{lemma}\label{lem:symmetric_correlation_close_to_tracial} Let $\cM\subset B(\cH)$ be a von Neumann algebra, $h \in L_1(\cM)_+$ of norm $1$, and $q = \chi_{(1,\infty)}(h) \in L_2(c(\cM))$.

    Let $X,A,\nu,\mu$ be as in Theorem~\ref{thm:almost_synchronous_correlations} and, for every $x \in X$, let $(p^x_a)_{a \in A} \subset \cM$ be a partition of the unity.

    If $\int \sum_{a \in A} \| [p^x_a,h^{\frac 1 2}]\|_{L_2(\cM)}^2 d\mu(x) \leq \delta$, then 
    \[ \int \sum_{a,b \in A} \Big|\tr( p^x_a h^{\frac 1 2} p^y_b h^{\frac 1 2}) - \tau( q p^x_a q p^y_b q)\Big| d\nu(x,y) \leq 4 \delta^{\frac 1 4}.\]

    Moreover, for every $x\in X$ there is a partition of the unity $(r^x_a)_{a \in A} \subset q c(\cM) q$ such that 
    \[ \int \sum_{a,b \in A} \Big|\tr( p^x_a h^{\frac 1 2} p^y_b h^{\frac 1 2}) - \tau( r^x_a r^y_b)\Big| d\nu(x,y) \leq 38 \delta^{\frac 1 4}.\]
  \end{lemma}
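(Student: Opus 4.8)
The plan is to transpose Vidick's argument for \cite[Theorem~3.1]{MR4373849} to the core of $\cM$, using the ``type III'' inequality of Section~\ref{prop:Connes_inequality} as the only bridge between the two $L_2$-spaces in play. Throughout I would write $q=\chi_{(1,\infty)}(h)=\chi_{(1,\infty)}(h^{1/2})$, a $\tau$-trace-$1$ projection in $c(\cM)$, and note that $\|h^{1/2}\|_{L_2(\cM)}=\tr(h)^{1/2}=1$. \emph{Step 1 (transfer the defect to the core).} Applying Corollary~\ref{cor:Connes_partition_of_unity} with $x=h^{1/2}$ to the partition of unity $(p^x_a)_a$ gives, for each fixed $x$, the bound $\sum_a\|[p^x_a,q]\|_{L_2(\tau)}^2\le 2\big(\sum_a\|[p^x_a,h^{1/2}]\|_{L_2(\cM)}^2\big)^{1/2}$. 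Integrating in $x$ and using concavity of the square root ($\mu$ being a probability measure), the hypothesis yields
\[
\int\sum_a\|[p^x_a,q]\|_{L_2(\tau)}^2\,d\mu(x)\le 2\sqrt{\delta}.
\]

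\emph{Step 2 (the first inequality).} The idea is to rewrite both correlations as inner products of ``POVM pieces'' of a unit vector and compare them. On the Haagerup side, $h^{1/4}\in L_4(\cM)$ together with the multiplication $L_4(\cM)\cdot\cM\cdot L_4(\cM)\subset L_2(\cM)$ give $\tr(p^x_a h^{1/2}p^y_b h^{1/2})=\big\langle h^{1/4}p^x_a h^{1/4},\,h^{1/4}p^y_b h^{1/4}\big\rangle_{L_2(\cM)}$, with $\sum_a h^{1/4}p^x_a h^{1/4}=h^{1/2}$ of norm $1$; on the core side, setting $r^x_a:=qp^x_a q\in qc(\cM)q$ gives a POVM $\sum_a r^x_a=q$ with $\tau(qp^x_a qp^y_b q)=\langle r^x_a,r^y_b\rangle_{L_2(\tau)}$. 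Lemma~\ref{lem:tr_HaagerupLp} at $p=1$ supplies the elementary identity $\tr(zh)=\tau(qz)$ for $z\in\cM$ that underlies the comparison. Running Vidick's chain of estimates — each step bounded by the commutator norms $\|[p^x_a,h^{1/2}]\|_{L_2(\cM)}$ and $\|[p^x_a,q]\|_{L_2(\tau)}$ controlled in Step~1, and organised so that every summation over the outcome set $A$ is collapsed, by absorbing the signs into a unitary $\sum_a\pm p^x_a\in U(\cM)$ (or a contraction in $qc(\cM)q$), into a sum of squares $\sum_a\|\cdot\|^2\le1$ coming from a partition of unity, rather than into a crude $\sum_a\|\cdot\|\le|A|^{1/2}(\sum_a\|\cdot\|^2)^{1/2}$ bound — should yield
\[
\int\sum_{a,b}\big|\tr(p^x_a h^{1/2}p^y_b h^{1/2})-\tau(qp^x_a qp^y_b q)\big|\,d\nu(x,y)\le 4\delta^{1/4}.
\]

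\emph{Step 3 (round the POVM to a PVM).} Here I would work in the finite von Neumann algebra $(qc(\cM)q,\tau)$. From the orthogonal decomposition $p^x_a q=qp^x_a q\oplus(1-q)p^x_a q$ in $L_2(\tau)$ and the identity $(1-q)p^x_a q=(1-q)[p^x_a,q]$ one computes the defect
\[
\tau\Big(q-\sum_a (r^x_a)^2\Big)=\sum_a\|(1-q)p^x_a q\|_{L_2(\tau)}^2\le\sum_a\|[p^x_a,q]\|_{L_2(\tau)}^2,
\]
which by Step~1 integrates to $\le 2\sqrt{\delta}$; moreover the coarsened two-outcome POVMs $(qp^x_S q,\,qp^x_{S^c}q)$ have defect $2\,\tau(r^x_S r^x_{S^c})\le 2\,\tau(q-\sum_a(r^x_a)^2)$ uniformly over $S\subset A$. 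Feeding this into the standard POVM-to-PVM rounding lemma (as in \cite{MR4373849}) produces, for each $x$, a partition of unity $(r^x_a)_a\subset qc(\cM)q$ with $\int\sum_a\|qp^x_a q-r^x_a\|_{L_2(\tau)}^2\,d\mu(x)\lesssim\sqrt{\delta}$; comparing $\tau(qp^x_a qp^y_b q)$ with $\tau(r^x_a r^y_b)$ by the same outcome-collapsing device adds $O(\delta^{1/4})$, and together with Step~2 one arrives at the second inequality with constant $38$.

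\emph{The main obstacle.} Conceptually the argument is Vidick's; the work lies in two interacting points. First, everything must be carried out inside Haagerup's $L_p(\cM)$: one must never compare an element of $L_2(\cM)$ directly with one of $L_2(c(\cM),\tau)$ — the two spaces meet only in $\{0\}$ — and the only legitimate bridge is Proposition~\ref{prop:Connes_inequality}/Corollary~\ref{cor:Connes_partition_of_unity} applied to commutators, together with the $L_4(\cM)$ square roots $h^{1/4}$ and the identity $\tr(zh)=\tau(qz)$. Second, the combinatorics must keep every constant free of $|X|$ and $|A|$, which forces each summation over outcomes to be routed through a partition of unity (a unitary substitution, or a Bessel-type bound with constant $1$) instead of a Cauchy--Schwarz against the counting measure on $A$. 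Step~2, where these two difficulties meet, is the delicate part of the proof.
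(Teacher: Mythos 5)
Your plan is the paper's proof: transfer the commutator defect to the core via Corollary~\ref{cor:Connes_partition_of_unity}, compare the two correlations through the identity furnished by Lemma~\ref{lem:tr_HaagerupLp}, then round the POVM $(qp^x_aq)_a$ to a PVM. Steps 1 and 3 are correct as written (your computation $\tau(q-\sum_a(qp^x_aq)^2)\le\sum_a\|[p^x_a,q]\|_{L_2(\tau)}^2$ is exactly how the paper feeds the defect into the rounding lemma).

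The one place you stop short of a proof is Step 2, which you yourself flag as ``should yield'' the bound. For the record, the paper closes it with no delicate combinatorics: it introduces the two intermediate correlations $C_2(x,y,a,b)=\tr(hp^x_ap^y_bp^x_a)=\|h^{1/2}p^x_ap^y_b\|_{L_2(\cM)}^2$ and $C_3(x,y,a,b)=\tau(qp^x_ap^y_bp^x_a)=\|qp^x_ap^y_b\|_{L_2(\tau)}^2$, which are \emph{equal} by Lemma~\ref{lem:tr_HaagerupLp} --- this is your identity $\tr(zh)=\tau(qz)$ applied to $z=p^x_ap^y_bp^x_a$, and it is the only bridge between the two $L_2$-spaces. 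Writing also $C_1=\|p^x_ah^{1/2}p^y_b\|_{L_2(\cM)}^2$ and $C_4=\|p^x_aqp^y_b\|_{L_2(\tau)}^2$ (squared norms, not the inner-product form $\langle h^{1/4}p^x_ah^{1/4},h^{1/4}p^y_bh^{1/4}\rangle$ you propose --- the squared-norm form is what makes the next step a one-liner), each of the two remaining comparisons $\|C_1-C_2\|_1$ and $\|C_3-C_4\|_1$ is a single application of Lemma~\ref{lem:Hilbert_space} to the vector-valued functions $(x,y,a,b)\mapsto p^x_ah^{1/2}p^y_b$ and $(x,y,a,b)\mapsto h^{1/2}p^x_ap^y_b$ (resp.\ with $q$ in place of $h^{1/2}$): the difference is $[p^x_a,h^{1/2}]p^y_b$, and the sum over $b$ collapses exactly as you anticipate because $(p^y_b)_b$ is a partition of unity, $\sum_b\|[p^x_a,h^{1/2}]p^y_b\|^2=\|[p^x_a,h^{1/2}]\|^2$. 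No unitary substitution is needed here (that device is used in the proof of Corollary~\ref{cor:Connes_partition_of_unity}, not in this lemma). One last point: in Step 3 you invoke the rounding lemma ``as in Vidick''; since $qc(\cM)q$ is a general finite von Neumann algebra, you need the infinite-dimensional version with linear dependence on the defect, \cite[Theorem 1.2]{orthonormalisation}, which is what the paper uses to land on the constant $38$.
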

  \begin{proof} Let $\Omega=X\times X\times A \times A$ and $m$ the product of the measure $\nu$ and the counting measure on $A \times A$. Consider the norm $1$ elements $C_1,C_2,C_3,C_4$ of $L_1(\Omega,m)$ defined by
        \begin{align*}
      C_1(x,y,a,b) & =\tr( h^{\frac 1 2} p^y_b h^{\frac 1 2} p^x_a) = \| p^x_a h^{\frac 1 2}  p^y_b\|_{L_2(\cM)}^2,\\
      C_2(x,y,a,b) &= \tr( h p^x_a p^y_b p^x_a)  = \| h^{\frac 1 2} p^x_a p^y_b\|_{L_2(\cM)}^2,\\
      C_3(x,y,a,b) &= \tau( q p^x_a p^y_b p^x_a) = \| q p^x_a p^y_b\|_{L_2(\tau)}^2,\\
      C_4(x,y,a,b) & =\tau( q p^y_b q p^x_aq) =  \| p^x_aq p^y_b\|_{L_2(\tau)}^2.
    \end{align*}
We have $C_2=C_3$ by Lemma~\ref{lem:tr_HaagerupLp}. Moreover, by
Lemma~\ref{lem:Hilbert_space}, we can bound
    \begin{align*} \|C_1 - C_2\|_1^2 & \leq {4 \int \sum_{a,b} \|[p^x_a,h^{\frac 1 2}] p^{y}_b\|_{L_2(\cM)}^2 d\nu(x,y)}\\
      &= {4 \int \sum_a \| [p^x_a,h^{\frac 1 2}]\|_{L_2(\cM)}^2d\mu(x)}\leq {4\delta}.
    \end{align*}
    In the same way, by Lemma~\ref{lem:Hilbert_space} and Corollary~\ref{cor:Connes_partition_of_unity}, we obtain
\begin{equation}\label{eq:normC3C4} \|C_3-C_4\|_1^2\leq 4 \int \sum_a \| [p^x_a,q]\|_{L_2(\tau)}^2 d\mu(x) \leq 8\sqrt{\delta}. 
\end{equation}
We deduce
\[ \|C_1 - C_4\|_1= \|C_1-C_2 + C_3-C_4\|_1 \leq 2\sqrt{\delta} + 2\sqrt{2} \delta^{\frac 1 4}.\]
We obtain
\[ \|C_1 - C_4\|_1 \leq \min(2, 2\sqrt{\delta} + 2\sqrt{\sqrt{2\delta}}) \leq 4 \delta^{\frac 1 4},\]
which is precisely the first conclusion of the lemma.

For the second conclusion, we use an orthogonalization result for POVMs that originates from the work of Kempe and Vidick \cite{zbMATH06301159}. We use a form that applies to infinite dimensional space and with the right dependance from \cite{orthonormalisation}. We can rewrite the second inequality in \eqref{eq:normC3C4} as
\[ \int \sum_a \tau( (q p^x_a q)^2) d\mu(x)  \geq 1 - 8 \sqrt{\delta},\]
so by \cite[Theorem 1.2]{orthonormalisation} for every $x \in X$, there is a partition of the unity $(r^x_a)_{a \in A}$ in $q c(\cM) q$ such that
\[ \sum_a \tau( |q p^x_a q - r^x_a|^2) \leq 9(1- \tau( (q p^x_a q)^2)).\]
As a consequence,
\[ \int \sum_a \tau( |q p^x_a q - r^x_a|^2) d\mu(x) \leq 72 \sqrt{\delta}.\]
Using twice Lemma~\ref{lem:Hilbert_space} gives
\[ \int \sum_{a,b} |\tau( q p^y_b q p^x_aq) - \tau( r^y_b  r^x_a)|d\nu(x,y) \leq 4 \sqrt{72 \sqrt{\delta}} \leq 34 \delta^{\frac 1 4}.\qedhere\]
  \end{proof}
  \begin{proof}[Proof of Theorem~\ref{thm:almost_synchronous_correlations}] The idea is simple: the assumption of the theorem implies that the correlation $\langle p^x_a \tilde{p}^y_b \xi,\xi\rangle$ is close to the symmetric correlation $\tr( p^x_a h_\xi^{\frac 1 2} p^y_b h_\xi^{\frac 1 2})$, so we can apply Lemma~\ref{lem:symmetric_correlation_close_to_tracial}. We proceed in several steps.

We first observe that for every $y$ there is a POVM $(p'^y_b)_{b \in A} \subset \cM$ such that for every $x,a,b$,
    \begin{equation}\label{eq:standardform} \langle p^x_a \tilde{p}^y_b \xi,\xi\rangle = \tr(p^x_a h_\xi^{\frac 1 2} p'^y_b h_\xi^{\frac 1 2}\rangle.
    \end{equation}
    This is a standard fact from von Neumann algebras, that we recall for completeness. The map $\langle \cdot \tilde{p}^y_b \xi,\xi\rangle$ on $\cM$ is a positive element of $\cM_*$, so there is a unique positive $h(\xi,y,b) \in L_1(\cM)_*$ such that $\langle \cdot \tilde{p}^y_b \xi,\xi\rangle = \tr(\cdot h(\xi,y,b))$. The fact that $\tilde{p}^y_b \leq 1$ implies that $0 \leq h(\xi,y,b) \leq h_\xi$, that is there is a unique $p'^y_b \in c(\cM)$ such that $0 \leq p'^y_b \leq \supp(h_\xi)$ and $h(\xi,y,b) = h_\xi^{\frac 1 2} p'^y_b h_\xi^{\frac 1 2}$. We have $\theta_s(p'^y_b) = p'^y_b$, so $p'^y_b\in \cM$. By uniqueness, we have $h_\xi^{\frac 1 2}\sum_b p'^y_b h_\xi^{\frac 1 2}= h_\xi$, that is $\sum_b p'^y_b = \supp(h_\xi)$. Replacing $p'^y_b$ by $p'^y_b + (1-\supp(h_\xi))$ for some $b$ turns $p'^y$ to a POVM still satisfying \eqref{eq:standardform}.

We now claim that
\begin{equation}\label{eq:commutator_p_h} \int \sum_a \| [p^x_a,h_\xi^{\frac 1 2}]\|_{L_2(\cM)}^2 d\mu(x) \leq 4\delta.
\end{equation}
Before we justify this, observe that by Lemma~\ref{lem:symmetric_correlation_close_to_tracial}, this implies
\begin{equation}\label{eq:correlations_symmetric} \int \sum_{a,b \in A} \Big|\tr( p^x_a h^{\frac 1 2} p^y_b h^{\frac 1 2}) - \tau( q_\xi p^x_a q_\xi p^y_b q_\xi)\Big| d\nu(x,y) \leq 4\sqrt{2} \delta^{\frac 1 4}.
\end{equation}
To prove \eqref{eq:commutator_p_h}, we expand
\[ \int \sum_a \| [p^x_a,h_\xi^{\frac 1 2}]\|_{L_2(\cM)}^2 d\mu(x) = 2-2 \int \sum_a \|h_\xi^{\frac 1 4} p^x_a h_\xi^{\frac 1 4}\|_{L_2(\cM)}^2 d\mu(x).\]
By \eqref{eq:standardform}, $\int \sum_{a \in A} \langle p^x_a \tilde{p}^x_a \xi,\xi \rangle d\mu(x)$ is the scalar product in  $L_2(\mu;\ell_2(A;L_2(\cM))$ of $(x,a) \mapsto h_\xi^{\frac 1 4} p^x_a h_\xi^{\frac 1 4}$ with $(x,a) \mapsto h_\xi^{\frac 1 4} p'^x_a h_\xi^{\frac 1 4}$. These two vectors clearly have norm at most $1$, so by the Cauchy-Schwarz inequality and our assumption, they both have norm at least $1-\delta$. In particular,
\[ \int \sum_a \|h_\xi^{\frac 1 4} p^x_a h_\xi^{\frac 1 4}\|_{L_2(\cM)}^2 d\mu(x) \geq (1-\delta)^2 \geq 1-2\delta\]
and \eqref{eq:commutator_p_h} follows.
    
The next observation is that
\begin{equation}\label{eq:pandpprime_close} \int \sum_a \|h_\xi^{\frac 1 2}(p^x_a - (p'^x_a)^{\frac 1 2})\|_{L_2(\cM)}^2 d\mu(x)\leq 6 \sqrt{\delta}.
\end{equation}
Expanding the square, we can write the left-hand side of \eqref{eq:pandpprime_close} as
\[ %% \int \sum_a \|h_\xi^{\frac 1 2}(p^x_a - p'^x_a)\|_{L_2(\cM)}^2 d\mu(x) =
\int \big(2 -2 \sum_a \tr(h_\xi^{\frac 1 2} p^x_a h_\xi^{\frac 1 2} (p'^x_a)^{\frac 1 2}) + 2 \Re \sum_a \tr(h_\xi^{\frac 1 2} [p^x_a, h_\xi^{\frac 1 2}] (p'^x_a)^{\frac 1 2})\big)d\mu(x).\]
Using our assumption and the Cauchy-Schwarz inequality, we obtain that the left-hand side of \eqref{eq:pandpprime_close} is less than
\[ 2\delta + 2 \Big( \int \sum_a \| [p^x_a,h_\xi^{\frac 1 2}]\|_{L_2(\cM)}^2 d\mu(x)\Big)^{\frac 1 2} \leq 2\delta + 4 \sqrt{\delta}.\]
The last inequality is by \eqref{eq:commutator_p_h}. This proves \eqref{eq:pandpprime_close}.

Now observe that $\tr(p^x_a h_\xi^{\frac 1 2} p^y_b h_\xi^{\frac 1 2})$ is the norm of $p^x_a h_\xi^{\frac 1 2} p^y_b$ in $L_2(\cM)$. Similarly, by \eqref{eq:standardform}, $\langle p^x_a \tilde{p}^y_b \xi,\xi\rangle$ is the norm of $p^x_a h_\xi^{\frac 1 2} (p'^y_b)^{\frac 1 2}$. We can therefore apply Lemma~\ref{lem:Hilbert_space} and obtain 
\[ \int \sum_{a,b} \Big| \langle p^x_a \tilde{p}^y_b \xi,\xi\rangle - \tr(p^x_a h_\xi^{\frac 1 2} p^y_b h_\xi^{\frac 1 2})| d\nu(x,y) \leq 2\Big(\int \sum_{a,b}\|p^x_a h_\xi^{\frac 1 2} (p^y_b-(p'^y_b)^{\frac 1 2})\|_{L_2(\cM)}^2 d\nu(x,y) \Big)^{\frac 1 2}, \]
which is less than $3\delta^{\frac 1 4}$ by \eqref{eq:pandpprime_close}. Together with \eqref{eq:correlations_symmetric}, this concludes the proof of the first half of the theorem. The second half is immediate from the moreover part of Lemma~\ref{lem:symmetric_correlation_close_to_tracial}.
  \end{proof}

  \section{Application for strategies and games}\label{sec:games}
  We now prove the main results of the introduction, Theorem~\ref{thm:main_games} and~\ref{thm:main_gamesvN}. As announced, we prove a more precise form of them in Corollary~\ref{cor:application_games}. Before, we introduce some notation. Given a von Neumann algebra $\cM \subset B(\cH)$, we denote by $\val_\cM(\cG)$ the supremum of $\val(\cG,\cP)$ over all commuting strategies with $p^x_a \in \cM$ and $\tilde{p}^y_b \in \cM'$.

  If $(\cM,\tau)$ is a von Neumann algebra with a normal tracial state $\tau$, the \defi{tracial (or synchronous) value}  $\val_{\cM,\tau}^{tr}(\cG)$ is the supremum of
  \[ \int \sum_{a,b} D(x,y,a,b) \tau( p^x_a p^y_b),\]
  over all partitions of unity $(p^x_a)_{a \in A} \subset \cM$.

Finally, if $(\cM,\tau)$ is a von Neumann algebra with a semifinite normal trace, we denote by $\val_{\leq \cM,\tau}^{tr}(\cG)$ the supremum of $\val_{q \cM q, \frac{1}{\tau(q)}\tau}^{tr}(\cG)$ over all nonzero projections $q \in \cM$ with finite trace.

  \begin{corollary}\label{cor:application_games} Let $\cG$ be an $\alpha$-synchronous game and $\cM\subset B(\cH)$ be a von Neumann algebra. If $\val_\cM(\cG) \geq 1-\delta$, then there is a trace one projection $q \in c(\cM)$ such that $\val_{q c(\cM)q,\tau}^{tr}(\cG) \geq 1 - c (\delta/\alpha)^{\frac 1 4}$ for a universal constant $c$. %For example $c=12$ works.
  \end{corollary}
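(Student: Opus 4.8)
The plan is to reduce to Theorem \ref{thm:almost_synchronous_correlations} by exploiting the $\alpha$-synchronicity to convert the hypothesis on the value into the hypothesis on the diagonal correlation appearing in that theorem. First I would fix a commuting strategy with $\val(\cG,\cP) \geq 1 - \delta$ realized by PVMs $(p^x_a)_a \subset \cM$, $(\tilde p^x_a)_a \subset \cM'$ and a unit vector $\xi \in \cH$. Let me write $\nu$ for the symmetric probability measure of $\cG$ and $\mu$ for its marginal; the $\alpha$-synchronicity says precisely that the diagonal part $\nu(\{(x,x)\})$ of $\nu$ dominates $\alpha\,d\mu(x)$ as a measure on the diagonal. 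Since $D(x,x,a,b) = 1_{a=b}$ and $D \leq 1$, the value satisfies
\[
1 - \delta \leq \val(\cG,\cP) \leq \int_{x\neq y}\!\! 1\, d\nu + \int_X \sum_{a} \langle p^x_a \tilde p^x_a \xi,\xi\rangle\, d\nu(x,x) \leq 1 - \nu_{\mathrm{diag}} + \int_X \sum_a \langle p^x_a \tilde p^x_a \xi,\xi\rangle\, d\nu(x,x),
\]
where $\nu_{\mathrm{diag}}$ is the total mass of the diagonal. Hence $\int_X \sum_a \langle p^x_a\tilde p^x_a\xi,\xi\rangle\, d\nu(x,x) \geq \nu_{\mathrm{diag}} - \delta$, and dividing by $\nu_{\mathrm{diag}}$ (and using that on the diagonal $d\nu(x,x) \geq \alpha\, d\mu(x)$ while $\sum_a \langle p^x_a \tilde p^x_a \xi,\xi\rangle \leq 1$) one gets $\int_X \sum_a \langle p^x_a \tilde p^x_a \xi,\xi\rangle\, d\mu(x) \geq 1 - \delta/\alpha$. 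This is exactly the hypothesis of Theorem \ref{thm:almost_synchronous_correlations} with $\delta$ replaced by $\delta' := \delta/\alpha$.

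Next I would apply the "moreover" part of Theorem \ref{thm:almost_synchronous_correlations}: there is a trace one projection $q = q_\xi \in c(\cM)$ and, for each $x$, a partition of unity $(r^x_a)_a \subset q\,c(\cM)\,q$ with
\[
\int \sum_{a,b} \bigl| \tr(p^x_a h_\xi^{1/2} p^y_b h_\xi^{1/2}) - \tau(r^x_a r^y_b)\bigr|\, d\nu(x,y) \lesssim (\delta/\alpha)^{1/4}.
\]
Combining this with the first inequality of that theorem (relating $\langle p^x_a \tilde p^y_b \xi,\xi\rangle$ to $\tr(p^x_a h_\xi^{1/2} p^y_b h_\xi^{1/2})$), the triangle inequality gives
\[
\int \sum_{a,b}\bigl|\langle p^x_a \tilde p^y_b \xi,\xi\rangle - \tau(r^x_a r^y_b)\bigr|\, d\nu(x,y) \lesssim (\delta/\alpha)^{1/4}.
\]
Since $0 \leq D(x,y,a,b) \leq 1$, multiplying by $D$ and integrating preserves this estimate, so $\bigl|\val(\cG,\cP) - \int \sum_{a,b} D(x,y,a,b)\tau(r^x_a r^y_b)\, d\nu(x,y)\bigr| \lesssim (\delta/\alpha)^{1/4}$. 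As $\val(\cG,\cP) \geq 1 - \delta \geq 1 - (\delta/\alpha)^{1/4}$ (recall $\alpha,\delta \in (0,1)$, so $\delta \leq (\delta/\alpha)^{1/4}$ up to constants after the estimate is already of that order — or simply absorb into $c$), the tracial strategy $(r^x_a)_a$ in $q\,c(\cM)\,q$ with the normalized trace $\tau$ (which is a state since $\tau(q)=1$) witnesses $\val_{q c(\cM)q,\tau}^{tr}(\cG) \geq 1 - c(\delta/\alpha)^{1/4}$.

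The only genuinely new content beyond bookkeeping is the first paragraph — the measure-theoretic manipulation turning a value $\geq 1-\delta$ into a diagonal correlation $\geq 1 - \delta/\alpha$ — and this is where $\alpha$-synchronicity is used in an essential way; everything afterward is a direct invocation of Theorem \ref{thm:almost_synchronous_correlations} plus the trivial bound $D \leq 1$. I expect the only mild subtlety is being careful that $\nu$ may have atoms on the diagonal and that the decomposition of the integral over $X \times X$ into diagonal and off-diagonal parts, together with the comparison $d\nu|_{\mathrm{diag}} \geq \alpha\, d\mu$, is applied correctly; this is routine but worth stating cleanly. No finite-dimensionality or semifiniteness of $\cM$ is needed, which is exactly the point of the type III generalization.
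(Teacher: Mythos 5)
Your proof is correct and follows essentially the same route as the paper: use $\alpha$-synchronicity to convert the value bound into the diagonal estimate $\int\sum_a\langle p^x_a\tilde p^x_a\xi,\xi\rangle\,d\mu(x)\ge 1-\delta/\alpha$ (the paper phrases this via $\int\sum_{a,b}(1-D)\,\cP\,d\nu\le\delta$ restricted to the diagonal, which is equivalent to your decomposition), then invoke Theorem~\ref{thm:almost_synchronous_correlations} and the bound $0\le D\le 1$ to transfer the value to the tracial strategy $(r^x_a)$ in $q_\xi c(\cM)q_\xi$. The only cosmetic imprecision is the phrase ``dividing by $\nu_{\mathrm{diag}}$'' (what is actually used is $d\nu|_{\mathrm{diag}}\ge\alpha\,d\mu$ applied to the nonnegative integrand $1-\sum_a\cP_{x,x}(a,a)$, as your parenthetical correctly indicates), and the chaining to $\tau(r^x_ar^y_b)$ formally passes through $\tr(p^x_ah_\xi^{1/2}p^y_bh_\xi^{1/2})$ rather than the first displayed inequality alone, but both points are harmless.
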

  \begin{proof}
Assume that $\val_{\cM}(\cG) > 1-\delta$. This means that there is a commuting strategy $\cP = (\cH,\xi,p,\tilde{p})$ with $p \subset \cM$ and $\tilde{p}\subset \cM'$ such that $\val(\cG,\cP) \geq 1-\delta$, that is
    \[ \int \sum_{a,b} (1-D(x,y,a,b)) \langle p^x_a \tilde{p}^y_b \xi,\xi\rangle d\nu(x,y) \leq \delta.\] In particular, restricting the integral to the diagonal $\{(x,x) \mid x\in X\}$ and using that $\cG$ is $\alpha$-synchronous, we obtain
    \[ \alpha \int \sum_{a\neq b} \langle p^x_a \tilde{p}^x_b \xi,\xi\rangle d\mu(x) \leq \delta,\]
    where $\mu$ is the first marginal of $\nu$. Equivalently,
    \[\int \sum_{a} \langle p^x_a \tilde{p}^x_a \xi,\xi\rangle d\mu(x) \geq 1 - \frac{\delta}{\alpha}.\]
    So it follows from Theorem~\ref{thm:almost_synchronous_correlations} that the correlation $\langle p^x_a \tilde{p}^y_b \xi,\xi\rangle$ is $O( (\delta/\alpha)^{\frac 1 4})$ close in $L_1$-norm to the correlation $\tau( r^x_a r^y_b)$. In particular, the value of the game $\cG$ at these strategies are $O( (\delta/\alpha)^{\frac 1 4})$-close, and the corollary follows.
    \end{proof}
  In the particular case when $\cM$ is semifinite, we obtain.
  \begin{corollary}\label{cor:application_games_semifinite} Let $\cG$ be an $\alpha$-synchronous game and $\cM\subset B(\cH)$ be a von Neumann algebra with a normal faithful tracial state $\tau_0$. If $\val_\cM(\cG) \geq 1-\delta$, then $\val_{\leq \cM,\tau_0}^{\tr}(\cG) \geq 1 - c (\delta/\alpha)^{\frac 1 4}$.
  \end{corollary}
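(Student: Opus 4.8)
The plan is to reduce Corollary~\ref{cor:application_games_semifinite} to Corollary~\ref{cor:application_games} by identifying, when $\cM$ is semifinite, the core $c(\cM)$ and the projection $q$ produced there in explicit terms coming from Example~\ref{example:core_semifinite}. Recall that when $\cM$ carries a normal faithful tracial state $\tau_0$, we have $c(\cM) = c(\C) \otimes \cM = L_\infty(\R) \,\overline\otimes\, \cM$, with $\theta_s = \theta^\C_s \otimes 1$ and semifinite trace $\tau = \tau^\C \otimes \tau_0$. Under this identification, the fixed-point algebra is $\iota(\cM) = 1 \otimes \cM$. The first step is therefore to invoke Corollary~\ref{cor:application_games}: from $\val_\cM(\cG) \geq 1 - \delta$ we obtain a trace-one projection $q \in c(\cM)$ and a family of partitions of the unity $(r^x_a)_{a\in A} \subset q\,c(\cM)\,q$ with $\int \sum_{a,b} D(x,y,a,b)\,\tau(r^x_a r^y_b)\,d\nu(x,y) \geq 1 - c(\delta/\alpha)^{1/4}$, where $\tau$ is normalized so that $\tau(q) = 1$.

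The key step is then to compare the finite von Neumann algebra $(q\,c(\cM)\,q, \tau)$ with a finite corner of $(\cM, \tau_0)$. The point is that $q\, c(\cM)\, q$ sits inside $c(\cM) = L_\infty(\R)\,\overline\otimes\, \cM$, and any finite-trace projection of $c(\cM)$ is Murray--von Neumann equivalent (inside $c(\cM)$) to a projection of the form $\chi_E \otimes p$ for a finite-measure Borel set $E \subseteq \R$ and a finite-trace projection $p \in \cM$ — more precisely, by cutting down and using that $c(\cM)$ is semifinite, $q$ is equivalent to a subprojection of such a $\chi_E\otimes p$, and conversely $q\,c(\cM)\,q$ embeds trace-preservingly (up to the normalizing scalar) into a matrix amplification, hence into $p\,\cM\,p \,\overline\otimes\, B(\ell_2)$ cut by a finite projection, which by semifiniteness is again of the form $p'\,\cM\,p'$ for a finite-trace $p' \in \cM$. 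The cleanest route: since games only see traces of products $\tau(r^x_a r^y_b)$ and these are invariant under conjugating the whole family by a unitary of $c(\cM)$, we may replace $q$ by any Murray--von Neumann equivalent projection; and since the tracial value is monotone under trace-preserving inclusions of finite von Neumann algebras (a strategy in a subalgebra is a strategy in the overalgebra with the same value), it suffices to realize $(q\,c(\cM)\,q,\tau)$ as a trace-preserving corner of $(q'\,\cM\,q' \,\overline\otimes\, B(\ell_2), \tfrac{1}{\tau_0(q')}\tau_0 \otimes \Tr)$ for some finite-trace $q' \in \cM$; but $B(\ell_2)$ contributes nothing synchronous here unless one is careful, so instead I would argue directly that $q \in c(\cM)$ finite-trace forces $q \precsim \chi_{(-T,T)} \otimes q'$ for suitable $T$ and finite-trace $q' \in \cM$, hence $q\,c(\cM)\,q$ embeds, trace scaled, into $(\chi_{(-T,T)}\otimes q')\,c(\cM)\,(\chi_{(-T,T)}\otimes q') \cong M_n(\C) \otimes q'\,\cM\,q'$ for $n$ large (approximating $L_\infty(-T,T)$ by a finite-dimensional subalgebra costs nothing since all functions involved can be taken locally constant after a further small perturbation, or one passes to the limit), and $M_n \otimes q'\,\cM\,q' \cong q''\,\cM\,q''$ by semifiniteness. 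Transporting the $(r^x_a)$ through this chain yields partitions of the unity in a finite corner of $\cM$ with unchanged tracial value, so $\val^{\tr}_{\leq \cM,\tau_0}(\cG) \geq 1 - c(\delta/\alpha)^{1/4}$.

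I expect the main obstacle to be the bookkeeping in this identification step — specifically, verifying that the trace $\tau = \tau^\C \otimes \tau_0$ restricted and renormalized on the corner $q\,c(\cM)\,q$ matches the normalized trace on the finite corner $q''\,\cM\,q''$ under the Murray--von Neumann equivalences, and handling the passage from $L_\infty(-T,T)$ to a matrix algebra. The latter is genuinely harmless: one may either first replace $q$ by a projection in $\chi_{(-T,T)}\otimes\cM$ that is a finite sum $\sum_{j} \chi_{I_j}\otimes q_j$ over a partition of $(-T,T)$ into intervals (approximating $q$ in $\|\cdot\|_{L_2(\tau)}$, which by Proposition~\ref{prop:Connes_inequality}-type stability perturbs the $r^x_a$ only slightly), or simply observe that the synchronous value is a supremum and is unchanged when one replaces a finite von Neumann algebra by an increasing union with dense union, so the $L_\infty$-factor may be replaced by its finite-dimensional subalgebras in the limit. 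A cleaner alternative avoiding matrix amplification altogether: note $M_n(\C) \otimes q'\,\cM\,q'$ is itself a finite von Neumann algebra semifinite inside $\cM \,\overline\otimes\, B(\ell_2)$... but since $\cM$ is assumed to have a tracial \emph{state}, $\cM$ may not contain $M_n \otimes q'\,\cM\,q'$ as a corner unless $\tau_0(q') \leq 1/n$; this is why one must first shrink $q'$, which is always possible as $\tau_0$ is a faithful normal trace on a (necessarily properly infinite or diffuse enough, or else trivially finite-dimensional and one argues directly) algebra — in the finite-dimensional or atomic case the statement is elementary and in the diffuse case $q'\,\cM\,q'$ always contains projections of arbitrarily small positive trace. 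Assembling these observations gives the result.
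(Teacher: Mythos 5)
Your overall strategy --- identify $c(\cM)$ with $L_\infty(\R)\,\overline\otimes\,\cM$ via Example~\ref{example:core_semifinite} and then transfer the corner $q\,c(\cM)\,q$ produced by Corollary~\ref{cor:application_games} back into $\cM$ --- starts from the right place, but the transfer mechanism you propose has a genuine error and several unexecuted steps. The error: a trace-one projection $q\in L_\infty(\R)\,\overline\otimes\,\cM$ is a measurable field $(q_s)_{s\in\R}$ with $\int \tau_0(q_s)e^{-s}\,ds=1$, and nothing forces the set $\{s: q_s\neq 0\}$ to be bounded (take $q_s\neq 0$ for all $s$ with $\tau_0(q_s)e^{-s}$ integrable). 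The central support of such a $q$ dominates $\chi_{\{s:q_s\neq0\}}\otimes z$ for a nonzero central $z$, and Murray--von Neumann subequivalence requires $z(q)\leq z(\chi_{(-T,T)}\otimes q')$; so the claim $q\precsim \chi_{(-T,T)}\otimes q'$ is simply false in general. Repairing it by truncating in $s$ and perturbing the $r^x_a$ is possible but introduces error terms you do not control, and the later steps compound the problem: approximating $L_\infty(-T,T)$ by a finite-dimensional subalgebra while keeping the $r^x_a$ honest PVMs needs its own perturbation lemma, and the absorption $M_n(\C)\otimes q'\cM q'\cong q''\cM q''$ requires splitting $q'$ into $n$ mutually equivalent pieces inside $\cM$, which is obstructed both by atoms and by the center of $\cM$ (your ``diffuse vs.\ atomic'' dichotomy is not a dichotomy for general $\cM$ and is in any case not carried out).

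All of this machinery is unnecessary. The paper's argument is a one-line disintegration plus convexity: under the identification $(c(\cM),\tau)\cong(L_\infty(\R)\,\overline\otimes\,\cM, e^{-s}ds\otimes\tau_0)$, the projections $r^x_a\in q\,c(\cM)\,q$ are measurable fields $(r^x_a(s))_s$ of PVMs in $q_s\cM q_s$, and
\[ \frac{1}{\tau(q)}\tau(r^x_a r^y_b)\;=\;\int \frac{\tau_0(q_s)e^{-s}}{\tau(q)}\cdot\frac{1}{\tau_0(q_s)}\tau_0\big(r^x_a(s)\,r^y_b(s)\big)\,ds,\]
i.e.\ the correlation is a convex combination of correlations coming from corners $q_s\cM q_s$ of $\cM$ itself. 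Since the value of a game is an affine function of the correlation, some fiber $s$ achieves at least the average, whence $\val^{\tr}_{\leq\cM,\tau_0}(\cG)\geq \val^{\tr}_{q\,c(\cM)q,\tau}(\cG)$. No equivalence of projections, no finite-dimensional approximation, and no hypotheses on the type of $\cM$ are needed. I recommend you replace your transfer argument by this disintegration.
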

  This corollary is the combination of Corollary~\ref{cor:application_games} with the following lemma.
  \begin{lemma} Let $\cG$ be any symmetric game and $(\cM,\tau_0)$ be von Neumann with a normal faithful semifinite trace. Then $\val_{\leq \cM,\tau_0}^{\tr}(\cG) = \val_{\leq c(\cM),\tau}^{\tr}(\cG)$.
  \end{lemma}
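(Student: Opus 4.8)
The plan is to prove both inequalities $\val_{\leq \cM,\tau_0}^{\tr}(\cG) \leq \val_{\leq c(\cM),\tau}^{\tr}(\cG)$ and $\val_{\leq c(\cM),\tau}^{\tr}(\cG) \leq \val_{\leq \cM,\tau_0}^{\tr}(\cG)$ separately. For the first inequality, I would use Example~\ref{example:core_semifinite}: since $\cM$ is semifinite, $c(\cM) = c(\C) \otimes \cM$ with trace $\tau^\C \otimes \tau_0$. Given a nonzero finite-trace projection $q \in \cM$ and a strategy $(p^x_a)_{a \in A} \subset q\cM q$ with tracial value close to $\val_{\leq \cM,\tau_0}^{\tr}(\cG)$, I would choose a nonzero finite-trace projection $e \in c(\C)$ (for instance $e = \chi_{[0,T]}$ in the $L_\infty(\R)$ picture, which has $\tau^\C$-measure $1 - e^{-T} < \infty$) and transport the strategy to $(e \otimes q)(c(\C)\otimes\cM)(e\otimes q)$ via $p^x_a \mapsto e \otimes p^x_a$, adjusting one label so the family sums to $e \otimes q$. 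The normalized trace of $(e\otimes q)c(\cM)(e\otimes q)$ restricted to elements of the form $e \otimes z$ is exactly the normalized trace $\frac{1}{\tau_0(q)}\tau_0$ on $q\cM q$, so the tracial value is unchanged. Hence $\val_{\leq c(\cM),\tau}^{\tr}(\cG) \geq \val_{q\cM q, \frac{1}{\tau_0(q)}\tau_0}^{\tr}(\cG)$ for every such $q$, giving the inequality.

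For the reverse inequality, I would start from a nonzero projection $Q \in c(\cM)$ with $\tau(Q) < \infty$ and a partition of unity $(P^x_a)_{a \in A} \subset Q c(\cM) Q$ achieving a tracial value close to $\val_{\leq c(\cM),\tau}^{\tr}(\cG)$. Again using $c(\cM) = L_\infty(\R) \bar\otimes \cM$ with trace $(\int \cdot\, e^{-s}ds) \otimes \tau_0$, the key point is that the trace on a finite corner of $L_\infty(\R)\bar\otimes\cM$ can be approximated by traces on corners of the form $(\chi_I \otimes 1)(L_\infty(\R)\bar\otimes\cM)(\chi_I\otimes 1)$ where $I$ is a bounded interval; on such a corner the algebra is $L_\infty(I)\bar\otimes\cM \cong \cM$ (since $L_\infty(I)$ is a diffuse commutative algebra, tensoring is immaterial for the existence of tracial strategies — more precisely, any partition of unity in $L_\infty(I)\bar\otimes\cM$ can be conditioned/averaged to live in $\cM$, or one simply notes the trace is a convex combination of traces $\tau_0$). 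The cleanest route is: since $\cG$ is a fixed game and the value functional is weak-$*$ continuous in the strategy, and since finite-trace corners of $c(\cM)$ are, up to normalization, inductive limits / contain copies of $L_\infty(I)\bar\otimes(p\cM p)$, one reduces any tracial strategy in a finite corner of $c(\cM)$ to a tracial strategy in a finite corner of $\cM$ with the same value, by disintegrating over the $L_\infty(\R)$-variable. This disintegration produces, for $\mu$-almost every point of the interval, a partition of unity in $q\cM q$ (with $q$ a fixed finite-trace projection of $\cM$), whose tracial value integrated against the normalized measure equals the original tracial value; hence some fiber achieves at least that value.

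The main obstacle I anticipate is making the disintegration argument fully rigorous in the reverse direction: one must check that a partition of unity $(P^x_a)$ in $L_\infty(I)\bar\otimes (q\cM q)$ decomposes measurably as $s \mapsto (P^x_a(s))$ with each $(P^x_a(s))_a$ a genuine partition of unity in $q\cM q$, and that $\tau(P^x_a P^y_b) = \frac{1}{|I|_{\tau^\C}}\int_I \tau_0(P^x_a(s) P^y_b(s))\, e^{-s} ds$ after normalization, so that an averaging/pigeonhole argument selects a good fiber $s_0$. This is routine measurable-field-of-Hilbert-spaces bookkeeping but needs care; alternatively one can bypass disintegration by observing that $q\cM q$ embeds (trace-preservingly, after normalization) into any finite corner of $c(\cM)$ large enough, and run the same transport-of-strategy argument as in the first inequality in reverse, which I would actually prefer for brevity. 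Either way, combining the two inequalities yields $\val_{\leq \cM,\tau_0}^{\tr}(\cG) = \val_{\leq c(\cM),\tau}^{\tr}(\cG)$.
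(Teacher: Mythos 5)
Your main argument is correct and is essentially the paper's proof: the paper also identifies $(c(\cM),\tau)$ with $(L_\infty(\R)\overline{\otimes}\cM,\; e^{-s}ds\otimes\tau_0)$ and observes that a finite-trace projection and a PVM family in its corner decompose as measurable fields $(q_s)$, $(r^x_a(s))$, so that the normalized correlation $\frac{1}{\tau(q)}\tau(r^x_a r^y_b)$ is a convex combination of the fiberwise normalized correlations $\frac{1}{\tau_0(q_s)}\tau_0(r^x_a(s)r^y_b(s))$ --- i.e.\ the disintegration-plus-pigeonhole step you flag as the main obstacle is exactly the paper's one-line argument and is indeed routine. One caveat: the ``alternative'' you say you would prefer for the reverse inequality (embedding $q\cM q$ trace-preservingly into a large finite corner of $c(\cM)$ and ``running the transport in reverse'') does not work, since such an embedding only pushes strategies from $\cM$ up into $c(\cM)$ and thus reproves the easy inequality; for the reverse direction you genuinely need the fiberwise decomposition, so keep your primary route.
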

  \begin{proof} By Example~\ref{example:core_semifinite}, $(c(\cM),\tau)$ is isomorphic to $(L_\infty(\R) \overline{\otimes} \cM,e^{-s} ds \otimes \tau_0)$, so a finite projection $q \in c(\cM)$ corresponds to a measurable family $(q_s)_{s \in \R}$ of finite projections satisfying $\int \tau_0(q_s) e^{-s}ds=\tau(q)$. Moreover, if $(r^x_a)_{a \in A} \subset q c(\cM) q$ is a family of PVMs, the correlation $\big(\frac{1}{\tau(q)}\tau(r^x_a r^y_b)\big)_{x,y,a,b}$ is a convex combinations of the correlations $\big(\frac{1}{\tau_0(q_s)} \tau_0(r^x_a(s) r^y_b(s))\big)_{x,y,a,b}$. The lemma follows.
    \end{proof}
  When $\cM = M_d(\C)$, we recover Vidick's theorem.
  \begin{corollary}\cite{MR4373849} If an $\alpha$-synchronous game $\cG$ admits a commuting strategy with value $\geq 1-\delta$ on a Hilbert space of finite dimension $\leq d^2$, then $\val_{\leq M_d(\C),Tr}^{\tr}(\cG) \geq 1 - c (\delta/\alpha)^{\frac 1 4}$.
\end{corollary}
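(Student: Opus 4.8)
The plan is to deduce this from Corollary~\ref{cor:application_games} by keeping track of the sizes of the matrix blocks that occur. First I would replace the ambient von Neumann algebra by $\cM := W^*(p^x_a : x\in X,\ a\in A)\subseteq B(\cH)$: since every $\tilde p^y_b$ commutes with all the $p^x_a$, it lies in $\cM'$, so the given data is still a commuting strategy with $p\subset\cM$ and $\tilde p\subset\cM'$ of the same value, and now $\cM$ is a \emph{finite-dimensional} von Neumann algebra acting on $\cH$ with $\dim\cH\le d^2$. Running the proof of Corollary~\ref{cor:application_games} (i.e.\ Theorem~\ref{thm:almost_synchronous_correlations}) then produces, for every $x$, a partition of the unity $(r^x_a)_{a\in A}\subset q_\xi c(\cM)q_\xi$ with $\int\sum_{a,b}D(x,y,a,b)\,\tau(r^x_ar^y_b)\,d\nu(x,y)\ge 1-c(\delta/\alpha)^{1/4}$, where $q_\xi=\chi_{(1,\infty)}(h_\xi)$.

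The key step is to describe $q_\xi c(\cM)q_\xi$ explicitly and to bound the sizes of its blocks. Decompose $\cH\cong\bigoplus_i\C^{n_i}\otimes\C^{m_i}$ as an $\cM$-module, so that $\cM=\bigoplus_i M_{n_i}(\C)\otimes 1_{m_i}$ and $\sum_i n_im_i=\dim\cH\le d^2$. By Example~\ref{example:core_semifinite} we may take $c(\cM)=L_\infty(\R)\overline{\otimes}\cM$ with trace $\tau^\C\otimes\Tr_\cM$ and flow $\theta^\C_s\otimes 1$, and a direct computation identifies $h_\xi\in L_1(\cM)$ with the field $s\mapsto e^sD$, where $D=\bigoplus_iD_i\in\cM_+$ is the density of the state $\langle\,\cdot\,\xi,\xi\rangle$; that is, $D_i$ is the reduced density matrix of $\xi$ on the $i$-th block, so $\rk D_i$ is the Schmidt rank of $\xi$ there and $\rk D_i\le\min(n_i,m_i)\le\sqrt{n_im_i}\le d$. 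Since $q_\xi=\chi_{(1,\infty)}(h_\xi)\le 1\otimes\supp(D)$ in $c(\cM)$, we obtain
\[ q_\xi c(\cM)q_\xi\subseteq L_\infty(\R)\overline{\otimes}\big(\supp(D)\,\cM\,\supp(D)\big)\cong L_\infty(\R)\overline{\otimes}\Big(\bigoplus_i M_{r_i}(\C)\Big),\qquad r_i:=\rk D_i\le d.\]

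Finally I would unwind the trace. Writing a partition of the unity $(r^x_a)_a$ inside $q_\xi c(\cM)q_\xi$ as a measurable field $s\mapsto r^x_a(s)=\bigoplus_i r^x_a(s)_i$, the correlation $\tau(r^x_ar^y_b)=\int_\R e^{-s}\sum_i\Tr\!\big(r^x_a(s)_ir^y_b(s)_i\big)\,ds$ has total mass $\tau(q_\xi)=1$, hence is a convex combination over the pairs $(s,i)$ of the normalized tracial correlations of the partitions of the unity $(r^x_a(s)_i)_a$ in the matrix algebras $q_\xi(s)_iM_{r_i}(\C)q_\xi(s)_i$, each of rank $\le r_i\le d$. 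Therefore $\val^{\tr}_{q_\xi c(\cM)q_\xi,\tau}(\cG)\le\max_{1\le r\le d}\val^{\tr}_{M_r(\C),\tr_r}(\cG)=\val^{\tr}_{\le M_d(\C),\Tr}(\cG)$, which combined with the first paragraph yields the corollary. The only ingredient beyond Corollary~\ref{cor:application_games} is the rank bound $\rk D_i\le d$, forced by the Schmidt decomposition of $\xi$ on a space of dimension $\le d^2$; the remaining work is bookkeeping with finite-dimensional von Neumann algebras — the main point to get right being the identification $h_\xi\leftrightarrow(s\mapsto e^sD)$ and the ensuing convexity argument.
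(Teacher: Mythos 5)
Your proof is correct, and while it shares the overall skeleton with the paper's (reduce to Corollary~\ref{cor:application_games}, then disintegrate the corner of the core $L_\infty(\R)\overline{\otimes}\cM$ over $s\in\R$ by the same convexity argument as in the paper's lemma comparing $\val_{\leq\cM,\tau_0}^{\tr}$ with $\val_{\leq c(\cM),\tau}^{\tr}$), the key quantitative step is genuinely different. The paper bounds the \emph{algebra}: it takes both $\cM=W^*(p^x_a)$ and $\widetilde\cM=W^*(\tilde p^x_a)$ and uses the tensor-product dimension count $(\dim\cM)^2\leq\dim\cM\cdot\dim\widetilde\cM\leq\dim B(\cH)\leq d^4$ to conclude that every matrix block of $\cM$ has size $\leq d$, after which Corollary~\ref{cor:application_games_semifinite} applies. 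You instead bound the \emph{state}: keeping only $\cM=W^*(p^x_a)$ (whose blocks $M_{n_i}(\C)$ could a priori have $n_i$ as large as $d^2$ when the multiplicity $m_i$ is small), you observe that $h_\xi$ corresponds to the field $s\mapsto e^sD$ with $D$ the reduced density matrix of $\xi$, whose block ranks are controlled by the Schmidt rank, $\rk D_i\leq\min(n_i,m_i)\leq\sqrt{n_im_i}\leq d$, so that $q_\xi$ already cuts $c(\cM)$ down to corners of size $\leq d$. Your identification $h_\xi\leftrightarrow(s\mapsto e^sD)$ and the ensuing bookkeeping are all correct. What your route buys is that it never invokes the injectivity of the multiplication map $\cM\otimes\widetilde\cM\to B(\cH)$ (a point the paper passes over quickly and which is delicate for non-factors), and it exhibits the small matrix algebras explicitly inside $q_\xi c(\cM)q_\xi$; what the paper's route buys is that it needs no spectral information about $\xi$ at all and reduces immediately to the already-stated semifinite corollary.
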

  \begin{proof} Let $\cP$ be a strategy on $\cH$ of dimension $\leq d^2$ with valued $\geq 1-\delta$. Consider the algebras $\cM$ and $\widetilde{\cM}$  generated by $\{p^x_a \mid x \in X,a\in A\}$ and $\{\tilde{p}^x_a \mid x \in X,a\in A\}$. Without loss of generality we can assume that $\dim \cM \leq \dim \widetilde{\cM}$. They are commuting sub-algebras of $B(\cH)$, so $\cM \otimes \widetilde \cM$ embeds into $B(\cH)$. Taking dimensions, we obtain $(\dim \cM)^2 \leq \dim \cM \dim{\widetilde \cM} \leq d^4$. Therefore, $\dim \cM \leq d^2$ and $\dim \cM$ is isomorphic to a direct sum of algebras $M_k(\C)$ with $k \leq d$ and we deduce that $\val_{M_k(\C)}(\cG) \geq 1-\delta$ for some $k\leq d$. We conclude by Corollary~\ref{cor:application_games_semifinite}. 
  \end{proof}
  \bibliographystyle{alpha}
\bibliography{biblio}

\end{document}